\theoremstyle{plain}
  \newtheorem{theorem}{Theorem}[section]
  \newtheorem{lemma}[theorem]{Lemma}
  \newtheorem{proposition}[theorem]{Proposition}
\newcommand{\R}{\mathbb{R}}% the real numbers
\newcommand{\RP}{\mathbb{RP}}% the projective space
\newcommand{\GG}{\mathbb{G}}% barycenter
\newcommand{\FF}{\mathcal{F}}% the main functional
\newcommand{\proj}{\mathbb{P}}
\DeclareMathOperator{\vp}{vp}
\DeclareMathOperator{\tr}{tr}
\DeclareMathOperator{\cov}{cov}
\DeclareMathOperator{\Jac}{Jac}
\DeclareMathOperator{\Hess}{Hess}
\DeclareMathOperator{\Proj}{Proj}
\DeclareMathOperator{\GL}{GL}
\DeclareMathOperator{\PGL}{PGL}
\DeclareMathOperator{\Id}{Id}
\DeclareMathOperator{\cl}{cl}
\DeclareMathOperator{\intr}{int}
\numberwithin{equation}{section}
\title[Projective transformations and volume product]{Projective transformations of convex bodies and volume product}
\author{F. Balacheff, G. Solanes \& K. Tzanev}
\keywords{Convex body, Mahler conjecture, slicing conjecture, volume product.}
\subjclass[2010]{Primary: 52A20, 52A40. Secondary: 52A38}
\thanks{The first author acknowledges support by the FSE/AEI/MICINN grant RYC-2016-19334  ``Local and global systolic geometry and topology''. The second author is supported by the Serra Hunter Programme and the AEI/MICINN Mar\'ia de Maeztu grant CEX2020-001084-M.  The first and the second authors acknowledge support by the FEDER/AEI/MICINN grant PID2021-125625NB-I00 ``Estructuras y Desigualdades Geom\'etricas Universales'' and the AGAUR grant 2021-SGR-01015. }
\begin{document}
\begin{abstract}
  In this paper we study certain variational aspects of the volume product functional restricted to the space of small projective deformations of a fixed convex body. In doing so, we provide a short proof of a theorem by Klartag: a strong version of the slicing conjecture implies the Mahler conjecture. We also exhibit an interesting family of critical convex bodies in dimension $2$ containing saddle points for this functional.
\end{abstract}
\maketitle

%%%%%%%%%%%%%%%%%%%%%%%%%%%%%%%%%%%%%%%
\section{Introduction}
%%%%%%%%%%%%%%%%%%%%%%%%%%%%%%%%%%%%%%%

The volume product of a convex body $K\subset \R^n$ whose interior contains the origin is the quantity $\vp(K)\coloneqq|K|\cdot|K^\circ|$. Here $|\cdot |$ denotes the standard Lebesgue measure in $\R^n$, and $K^\circ=\{y \in \R^n\mid \langle x,y\rangle=\sum_{i=1}^n x_i y_i \leq 1\, \, \forall x \in K\}$ is the polar body of $K$. While it is well known \cite{San49} that the volume product among convex bodies whose barycenter is centered at the origin is maximized only by ellipsoids, the lower bound is still unknown in dimension greater than $2$. More precisely, the Mahler conjecture claims that any convex body $K$ containing the origin in its interior should satisfy the following optimal inequality:
\[
  \vp(K)\geq \frac{(n+1)^{n+1}}{(n!)^2}.
\]
The conjectured lower bound is reached by any $n$-simplex whose vertices span $\R^n$ and sum up to zero. Mahler actually proved the inequality in dimension $n=2$ \cite{Mah39}, but so far the conjecture remains open for higher dimensions.

If we endow the space of convex bodies in $\R^n$ with the usual Hausdorff topology, the volume product becomes a continuous function over the space $\mathcal{K}_0^n$ of convex bodies in $\R^n$ containing the origin in their interior. A standard compactness argument then shows that the functional $\vp : \mathcal{K}_0^n \to (0,\infty)$ attains its minimum. According to \cite{San49} any local minimizer $K$ of the volume product satisfies that its barycenter $\GG_K$ and the barycenter $\GG_{K^\circ}$ of its polar coincide with the origin.
Now recall that the covariance matrix $\cov(K)$ of a convex body $K$ whose barycenter is centered at the origin is the symmetric matrix with the following entries:
\[
  \cov(K)_{ij}=\frac{1}{|K|} \int_K x_i x_jdx.
\]
In this paper we propose a short proof of the following result due to Klartag.

\begin{theorem}\cite{Kla18}\label{th:main}
  Let $K$ be a local minimizer of the volume product. Then
  \[
    \cov(K^\circ)\geq \frac{1}{(n+2)^2}\cdot\cov(K)^{-1}
  \]
  in the sense of symmetric matrices.
\end{theorem}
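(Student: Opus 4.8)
The plan is to test local minimality of $\vp$ against an explicit $2n$-parameter family of projective deformations of $K$ and to read the matrix inequality off the Hessian of $\vp$ along that family. For small $c\in\R^n$ let $P_c$ be the projective transformation $P_c(x)=x/(1+\langle c,x\rangle)$; it fixes the origin and restricts to a diffeomorphism on a neighbourhood of $K$. For small $b\in\R^n$ put $K_{b,c}:=P_c(K)+b$, a smooth family in $\mathcal{K}_0^n$ with $K_{0,0}=K$. Because $\vp$ is invariant under $\GL(n)$, varying $(b,c)$ amounts to running over all small projective deformations of $K$; in particular, since $K$ is a local minimizer, the function $F(b,c):=\vp(K_{b,c})$ has a local minimum at $(0,0)$, so $\nabla F(0,0)=0$ and $\Hess F(0,0)\geq 0$.

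The first step is to write $F$ out. Straight from the definition of the polar one verifies the identities $(P_c(K))^\circ=K^\circ+c$ and $(L+b)^\circ=P_b(L^\circ)$ for any convex body $L$ with the origin in its interior, so $K_{b,c}^\circ=P_b(K^\circ+c)$. A Jacobian computation gives $|P_c(L)|=\int_L(1+\langle c,x\rangle)^{-(n+1)}\,dx$; since translations preserve volume, these combine into
\[
  F(b,c)=\left(\int_K(1+\langle c,x\rangle)^{-(n+1)}\,dx\right)\left(\int_{K^\circ}(1+\langle b,z+c\rangle)^{-(n+1)}\,dz\right).
\]

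The second step is to differentiate $F$ twice at $(0,0)$, which is legitimate since the integrands are smooth in $(b,c)$ near $0$. The condition $\nabla F(0,0)=0$ is precisely $\GG_K=\GG_{K^\circ}=0$, which we already know from \cite{San49}; feeding this back into the second derivatives makes all the ``mixed'' contributions vanish and leaves
\[
  \Hess F(0,0)=(n+1)\,|K|\,|K^\circ|\begin{pmatrix}(n+2)\cov(K^\circ) & -\Id\\ -\Id & (n+2)\cov(K)\end{pmatrix},
\]
where the first block of variables is $b$ and the second is $c$. Since $K$ has nonempty interior, $\cov(K)$ is positive definite, so the Schur complement criterion turns $\Hess F(0,0)\geq 0$ into $(n+2)\cov(K^\circ)\geq\frac{1}{n+2}\cov(K)^{-1}$, i.e. $\cov(K^\circ)\geq(n+2)^{-2}\cov(K)^{-1}$, which is the assertion. (The sign of the off-diagonal block is irrelevant, conjugation by $\mathrm{diag}(\Id,-\Id)$ preserving positivity.)

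All these ingredients — the Jacobian of $P_c$, the two polarity identities, the differentiation under the integral sign, the Schur complement — are routine. The one point needing real insight is that one has to vary the projective parameter $c$ and the translation parameter $b$ \emph{simultaneously}: setting $c=0$ (pure translations) or $b=0$ (pure $P_c$-deformations) only reproduces the vanishing of a barycenter and gives no constraint at second order, because along each of these subfamilies the polar body merely translates and volume is translation-invariant. It is exactly the off-diagonal block of $\Hess F(0,0)$, coming from $\partial_b\partial_c F$, that contributes the identity matrix and hence, via the Schur complement, the sharp constant $(n+2)^{-2}$.
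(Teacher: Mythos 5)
Your proof is correct and takes essentially the same route as the paper: you vary the same two-parameter family of projective deformations (your $K_{b,c}=P_c(K)+b$ is literally the paper's $(K^\circ+\xi)^\circ+x$, since $(K^\circ+\xi)^\circ=P_\xi(K)$), arrive at the same Hessian, and close with the same Schur-complement argument. The only difference is computational style: you write $F(b,c)$ as an explicit product of two integrals using the Jacobian of $P_c$ and differentiate under the integral sign, whereas the paper gets the diagonal block via the radial function $\rho_{(K+tv)^\circ}=1/h_{K+tv}$ and the mixed block via Santal\'o's derivative formula; your route is somewhat more uniform and self-contained.
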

It follows from this result that a strong version of the slicing conjecture implies the Mahler conjecture. This strong slicing conjecture states that the homogeneous quantity $\det [\cov(K)]/|K|^2$ is maximized among convex bodies with barycenter at the origin by any $n$-simplex whose vertices span $\R^n$ and sum up to zero, see \cite{Kla18} for further details.

Klartag's proof of Theorem \ref{th:main} is based on a variational argument in the space of projective images of $K$ that makes substantial use of the Laplace transform. In this paper we will also use a variational argument in the space of projective images of $K$ in order to recover Theorem \ref{th:main}, but our computation will be considerably simpler.
More precisely, in section \ref{sec:var} we consider the functional $\FF_K(x,\xi)\coloneqq\vp\left((K^\circ+\xi)^\circ+x\right)$ which is well-defined for any convex body $K$ in $\mathcal{K}_0^n$ on an open neighborhood of the origin in $\mathbb{R}^n\times\mathbb{R}^n$. By \cite{San49} we will easily observe (Proposition \ref{prop:jac}) that the origin is a critical point of the functional $\FF_K$ if and only if $\GG_K=\GG_{K^\circ}=0$. Then an elementary computation (Proposition \ref{prop:hess}) will lead us to the following expression for the Hessian matrix at such a critical point:
\[
  \Hess_{(0,0)} \FF_K=(n+1) \vp(K) \left(\begin{array}{cc}(n+2) \cov(K^\circ) & -I_n \\ -I_n & (n+2)\cov(K) \end{array}\right).
\]
Finally we will check (Proposition \ref{prop:equiv}) that the above Hessian is positive semi-definite if and only if the symmetric matrix inequality $(n+2) \cdot \cov(K^\circ)\geq [(n+2)\cdot\cov(K)]^{-1}$ holds true. This implies Theorem \ref{th:main}.

Our functional $\FF_K$ is defined using special projective transformations of the convex body, which are obtained as specific combinations of translations of the polar body with translations of the body itself. In section \ref{sec:proj} we will prove that this functional captures all the relevant information about variational properties of the volume product on the space of projective images of a given convex body.
More precisely, we will first prove in Proposition \ref{prop:def} that once a suitable affine coordinate chart is fixed, the projective transformations close to the identity map transform the convex body $K$ into a new convex body which still contains the origin in its interior. We will then show in Proposition \ref{prop:proj} that the volume product of these small projective perturbations of our convex body factors through the map $\FF_K$.

Finally in section \ref{sec:pent} we will exhibit an interesting family of convex bodies in dimension $2$ that are critical points of the volume product on the space of their own projective images.
More precisely, we will show the following.

\begin{theorem}\label{th:pentagons}
  There exists a $1$-parameter deformation $\{P_q\}_{q\in [-1/\sqrt{2},1/\sqrt{2}]}$ of an equilateral triangle $\Delta=P_{-1/\sqrt{2}}$ by axially symmetric pentagons satisfying the following properties.
  \begin{enumerate}[i)]
  \item \label{item_critical}
  Every pentagon $P_q$ is a critical point of the volume product in its own projective orbit; more precisely, the function $\mathcal{F}_{P_q}$ has a critical point at the origin.

  \item \label{item_saddle}
  The pentagon $P_0$ is a saddle point; more precisely the Hessian matrix $\Hess_{(0,0)}\mathcal{F}_{P_0}$ is non-degenerate and non-definite.

  \item \label{item_counterexample}
  For $q$ close enough to $-1/\sqrt{2}$ we have
  \begin{equation}\label{trace inequality}
    \tr[\cov(P_q)\cov(P_q^\circ)]>\tr[\cov(\Delta)\cov(\Delta^\circ)]=1/8.
  \end{equation}
  \end{enumerate}
\end{theorem}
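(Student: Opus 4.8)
\textit{Plan.}
The idea is to exhibit $\{P_q\}$ by an explicit construction and then verify the three assertions by direct computation, using the formulas of Section~\ref{sec:var}. Fix the $x$-axis as the common axis of symmetry; an axially symmetric pentagon is then determined by five vertices $(v_0,0)$, $(v_1,\pm w_1)$, $(v_2,\pm w_2)$, a five-parameter family. By Proposition~\ref{prop:jac}, assertion~\eqref{item_critical} is equivalent to $\GG_{P_q}=0$ together with $\GG_{P_q^\circ}=0$, and since both bodies are symmetric about the $x$-axis each of these is a single scalar equation (the vanishing of the abscissa of the respective barycenter). The equilateral triangle $\Delta$ appears as the degenerate member in which $(v_1,\pm w_1)$ lie on the edges joining $(v_0,0)$ to $(v_2,\pm w_2)$; note $\Delta$ already satisfies both scalar conditions. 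I would then produce, near $\Delta$, a one-parameter curve $q\mapsto P_q$ of axially symmetric pentagons along which both conditions persist — by solving the two equations, either explicitly (polygon centroids are rational functions of the vertices, and the vertices of $P_q^\circ$ are rational functions of those of $P_q$) or by the implicit function theorem anchored at $\Delta$. A convenient way to organize the computation is to search for the curve inside the locus of pentagons whose polar is affinely equivalent to the pentagon itself: such a self-duality would force $\GG_{P_q^\circ}=0$ once $\GG_{P_q}=0$, halve the work, and make transparent why the parameter interval is symmetric about $0$ — with $P_0$ the self-dual member and both endpoints $P_{\pm1/\sqrt2}$ affinely the triangle. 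Choosing the parametrization so that $P_{-1/\sqrt2}=\Delta$ and so that $|q|\le 1/\sqrt2$ is exactly the range on which the five vertices stay in convex position then settles~\eqref{item_critical}.

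For~\eqref{item_saddle} I would compute $\cov(P_0)$ and $\cov(P_0^\circ)$, both diagonal in the symmetry coordinates, from the standard polygon moment formulas, after dualizing $P_0$ to list the vertices of $P_0^\circ$. By Proposition~\ref{prop:hess} with $n=2$,
\[
  \Hess_{(0,0)}\mathcal F_{P_0}=3\,\vp(P_0)\begin{pmatrix} 4\cov(P_0^\circ) & -I_2 \\ -I_2 & 4\cov(P_0)\end{pmatrix}.
\]
Since the block $4\cov(P_0^\circ)$ is positive definite, a block congruence (Schur complement with respect to it) shows this matrix has the same inertia as $\mathrm{diag}\bigl(4\cov(P_0^\circ),\,M\bigr)$, where $M\coloneqq 4\cov(P_0)-\tfrac14\cov(P_0^\circ)^{-1}=\mathrm{diag}(m_1,m_2)$; hence $\Hess_{(0,0)}\mathcal F_{P_0}$ is non-degenerate and non-definite precisely when $m_1,m_2\neq 0$ and $m_1m_2<0$ (equivalently, $16\cov(P_0)\cov(P_0^\circ)$ has one eigenvalue above $1$ and one below). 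I would then check this numerical inequality on the computed entries. By Proposition~\ref{prop:equiv} and Section~\ref{sec:proj} this also records that $P_0$ is not a local minimizer of the volume product in its projective orbit.

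For~\eqref{trace inequality} put $f(q)\coloneqq\tr[\cov(P_q)\cov(P_q^\circ)]$. The endpoint value is $1/8$: since $\Delta$ and $\Delta^\circ$ are equilateral triangles, $\cov(\Delta)=cI_2$ and $\cov(\Delta^\circ)=c'I_2$, and the simplex equality case of Theorem~\ref{th:main} — or a direct moment computation — gives $c'=\tfrac1{16}c^{-1}$, so $f(-1/\sqrt2)=2cc'=1/8$. I would then Taylor-expand $f$ about $q=-1/\sqrt2$ along the explicit parametrization and check that the first non-vanishing coefficient beyond the constant term is strictly positive, giving $f(q)>1/8$ for $q$ slightly larger than $-1/\sqrt2$. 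The main obstacle throughout is the polar body: expressing the vertices, area, barycenter and covariance matrix of $P_q^\circ$ as explicit functions of $q$ (polar duality of a polygon being a workable but unwieldy rational operation), solving the coupled barycenter equations of the first step so that the resulting parametrization is clean enough to push the expansion in~\eqref{trace inequality} far enough to read off the sign, and maintaining both conditions of~\eqref{item_critical} simultaneously along the curve. Exploiting a self-duality $P_q^\circ\cong_{\mathrm{aff}}P_{-q}$ is what I expect to keep all of this within reach.
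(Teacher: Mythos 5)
Your plan matches the paper's strategy closely, and the key organizing insight — restrict attention to a two‑parameter locus of \emph{self-dual} axially symmetric pentagons so that $\mathbb G_{P^\circ}=0$ comes for free once $\mathbb G_P=0$, then cut out a one-parameter curve by the single scalar barycenter condition — is exactly what the paper does. The paper's realization is the explicit family with vertices $(-1/b,0)$, $(q,\pm\sqrt{1+q^2})$, $(b,\pm(1+bq)/\sqrt{1+q^2})$, which satisfies the stronger and cleaner identity $P^\circ=-P$ (Lemma~\ref{lemma:dualP}); the barycenter condition becomes the polynomial equation $f(q,b)=1$ of Lemma~\ref{lemma:criticalP}, and Lemma~\ref{lemma:graphP} solves it for $b(q)$ via the implicit function theorem. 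Your Schur-complement analysis for item~(\ref{item_saddle}) is the same move as Proposition~\ref{prop:equiv} (which is itself proved by a block congruence), and checking $\det\Hess<0$ is equivalent to your ``one eigenvalue of $16\cov(P_0)\cov(P_0^\circ)$ above $1$ and one below.'' For item~(\ref{item_counterexample}) the paper computes the first derivative of $t(q,b(q))$ at $q_0=-1/\sqrt2$ and finds it equal to $-9\sqrt2$, which is the first step of the Taylor expansion you propose.

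Two points to tighten. First, your statement that ``$P^\circ\cong_{\mathrm{aff}}P$ would force $\GG_{P^\circ}=0$ once $\GG_P=0$'' needs the affine equivalence to fix the origin (i.e., to be linear); a translation component would shift the barycenter. The paper's $P^\circ=-P$ has exactly this form. Second, your guessed symmetry $P_q^\circ\cong_{\mathrm{aff}}P_{-q}$ does not describe the paper's family (there $P_q^\circ=-P_q$ with the same parameter $q$, and $P_{-q}$ is generally a different pentagon in the family); it is not needed, and insisting on it would over-constrain the search. Beyond these, what remains to turn the plan into a proof is precisely the explicit construction, the area/inertia formulas (\ref{area})–(\ref{iyy}) via Lemma~\ref{lemma:inertia}, and the closed-form evaluations at $(0,1/\sqrt[4]3)$ and at $q_0=-1/\sqrt2$ that the paper carries out.
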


The main relevant property of this family is that part of these critical points are saddle points. The last property that there exists small deformations of $\Delta$ verifying \eqref{trace inequality}
is also remarkable for the following reason. Recall that the $2$-dimensional Euclidean unit ball $B$ also satisfies $\tr[\cov(B)\cov(B^\circ)]=1/8$, and that Kuperberg in a private communication to Klartag shows that the Euclidean ball of any dimension is a local maximizer of the functional $K\mapsto \tr[\cov(K)\cov(K^\circ)]$ in the class of centrally-symmetric convex bodies with sufficiently smooth boundary, see \cite[p.78]{Kla18}. At some point it was not clear if this upperbound was a global one, until it was disproved in \cite[Proposition 1.5]{Kla18} for large dimensions. Our family discards any non-symmetric analog of Kuperberg's local result near the $n$-simplex, even in low dimensions.

\noindent \textbf{Acknowledgements.} We would like to thank B. Klartag for valuable exchanges.

%%%%%%%%%%%%%%%%%%%%%%%%%%%%%%%%%%%%%%%
\section{A simple variational study of the volume product}\label{sec:var}
%%%%%%%%%%%%%%%%%%%%%%%%%%%%%%%%%%%%%%%

The purpose of this section is to give a short proof of Theorem \ref{th:main}.

Fix a convex body $K$ of $\R^n$ containing the origin in its interior, and consider the functional
\begin{eqnarray*}
  \mathcal{F}_K: \, \, U_K & \to     & (0,\infty)                            \\
  (x,\xi)                  & \mapsto & \vp\left((K^\circ+\xi)^\circ+x\right)
\end{eqnarray*}
which is well-defined in an open neighbourhood $U_K$ of $(0,0) \in \R^n \times \R^n$.

%%%%%%%%%%%%%%%%%%%%%%%%%%%%%%%%%%%%%%%
\begin{proposition}\label{prop:jac}
%%%%%%%%%%%%%%%%%%%%%%%%%%%%%%%%%%%%%%%
The Jacobian matrix of $\FF_K$ at the origin is the following:
\[
  \Jac_{(0,0)} \FF_K=-(n+1) \vp(K) \left(\begin{array}{cc} \mathbb{G}_{K^\circ} & \mathbb{G}_{K} \end{array}\right).
\]
\end{proposition}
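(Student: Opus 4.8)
The plan is to split the Jacobian into its two $n$-dimensional blocks, the partial derivative of $\FF_K$ in the $x$-directions and the partial derivative in the $\xi$-directions, and to show that each of them is a special case of the gradient of the single map $x\mapsto |(M+x)^\circ|$ at $x=0$, for a convex body $M$ containing the origin in its interior. That gradient is then computed once and for all by writing the polar volume as an integral of a power of the support function over the sphere.

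First I would record the two reduction identities, valid near $(0,0)$. Set $L_\xi\coloneqq(K^\circ+\xi)^\circ$. Translation invariance of Lebesgue measure gives $|L_\xi+x|=|L_\xi|$, so $\FF_K(x,\xi)=|L_\xi|\cdot|(L_\xi+x)^\circ|$; since $L_0=(K^\circ)^\circ=K$, this yields $\FF_K(x,0)=|K|\cdot|(K+x)^\circ|$. On the other hand, for $\xi$ small enough that $K^\circ+\xi$ still contains the origin in its interior, the bipolar theorem gives $(L_\xi)^\circ=K^\circ+\xi$, hence $\FF_K(0,\xi)=|K^\circ|\cdot|(K^\circ+\xi)^\circ|$. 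Thus $\partial_x\FF_K(0,0)$ and $\partial_\xi\FF_K(0,0)$ are, respectively, $|K|$ and $|K^\circ|$ times the gradient of $x\mapsto|(M+x)^\circ|$ at $x=0$, with $M=K$ and $M=K^\circ$.

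Next I would compute that gradient for an arbitrary convex body $M$ with the origin in its interior. Using the support function one has $h_{M+x}(u)=h_M(u)+\langle x,u\rangle$ and $|(M+x)^\circ|=\frac1n\int_{S^{n-1}}h_{M+x}(u)^{-n}\,du$ (from $\rho_{(M+x)^\circ}=1/h_{M+x}$); differentiating under the integral sign at $x=0$ gives $\nabla_x|(M+x)^\circ|\big|_{x=0}=-\int_{S^{n-1}}h_M(u)^{-n-1}u\,du$. Writing the barycenter of $M^\circ$ in polar coordinates, $\int_{M^\circ}y\,dy=\int_{S^{n-1}}\frac{\rho_{M^\circ}(u)^{n+1}}{n+1}u\,du=\frac{1}{n+1}\int_{S^{n-1}}h_M(u)^{-n-1}u\,du$, so $\nabla_x|(M+x)^\circ|\big|_{x=0}=-(n+1)|M^\circ|\,\GG_{M^\circ}$.

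Finally I would assemble: taking $M=K$ gives $\partial_x\FF_K(0,0)=|K|\cdot\big(-(n+1)|K^\circ|\,\GG_{K^\circ}\big)=-(n+1)\vp(K)\,\GG_{K^\circ}$, and taking $M=K^\circ$ (so $M^\circ=K$, $\GG_{M^\circ}=\GG_K$) gives $\partial_\xi\FF_K(0,0)=|K^\circ|\cdot\big(-(n+1)|K|\,\GG_K\big)=-(n+1)\vp(K)\,\GG_K$, which is exactly the stated block form of $\Jac_{(0,0)}\FF_K$. There is no genuine obstacle here: the only point requiring a word of justification is the differentiation under the integral sign, which is immediate since $h_M$ is bounded below by a positive constant on $S^{n-1}$, so the integrand depends smoothly on $x$ near $0$ with locally uniform bounds; the rest is the polar-coordinate bookkeeping above.
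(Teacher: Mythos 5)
Your proof is correct. The underlying computation is the same as the paper's, but your presentation differs in two small, useful ways: where the paper quotes Santal\'o's first-variation formula $\tfrac{d}{dt}\big|_0|(K-tv)^\circ|=(n+1)|K^\circ|\langle\GG_{K^\circ},v\rangle$ as a black box from \cite{San49} and then obtains the $\xi$-block via the symmetry $\vp(K)=\vp(K^\circ)$, you re-derive that formula from scratch by writing $|(M+x)^\circ|=\tfrac1n\int_{S^{n-1}}h_{M+x}^{-n}$ and differentiating under the integral sign, and you treat both blocks uniformly by reducing each to the single gradient $\nabla_x|(M+x)^\circ|\big|_{x=0}$ with $M=K$ and $M=K^\circ$ respectively. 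This buys you a self-contained argument; it also makes explicit the support-function/polar-coordinate identity that the paper only spells out later, in the proof of Proposition \ref{prop:hess}, so your write-up front-loads the bookkeeping that the Hessian computation would need anyway. All the individual steps (translation invariance of volume, the bipolar theorem for small $\xi$, the identity $\rho_{A^\circ}=1/h_A$, and the justification for differentiating under the integral sign) are stated and correct.
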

%%%%%%%%%%%%%%%%%%%%%%%%%%%%%%%%%%%%%%%

Recall the barycenter of a convex body $A$ is defined by the formula $\GG_A\coloneqq|A|^{-1}\int_A x dx$.

\begin{proof}
Equivalently, we shall prove that for any $(v,\eta)\in \R^n\times \R^n$, the following holds:
\[
  d_{(0,0)} \, \FF_K(v,\eta)=-(n+1) \vp(K) (\langle{\mathbb G}_{K^\circ},v\rangle+ \langle{\mathbb G}_{K},\eta\rangle).
\]
First recall that Santal\'o proved in \cite{San49} the following formula:
\[
  \left. \frac{d}{dt}\right|_{0} |(K-tv)^\circ|=(n+1) \cdot |K^\circ| \cdot \left\langle {\mathbb G}_{K^\circ},v\right\rangle.
\]
Then we easily compute that
\[
  d_{(0,0)} \, \FF_K(v,0)=\left.\frac{d}{dt}\right|_0 \vp(K+tv)=- (n+1) \cdot \vp(K) \cdot \left\langle {\mathbb G}_{K^\circ},v\right\rangle.
\]
The equality $d_{(0,0)} \, \FF_K(0,\eta)=-(n+1) \cdot \vp(K) \cdot \langle{\mathbb G}_{K},\eta\rangle$ easily follows from the previous computation using that $\vp(K)=\vp(K^\circ)$.
\end{proof}
In particular the origin is a critical point of the functional $\FF_K$ if and only if $\GG_K=\GG_{K^\circ}=0$.

%%%%%%%%%%%%%%%%%%%%%%%%%%%%%%%%%%%%%%%
\begin{proposition}\label{prop:hess}
%%%%%%%%%%%%%%%%%%%%%%%%%%%%%%%%%%%%%%%
Suppose that the origin is a critical point of $\FF_K$. Then its Hessian matrix is the following:
\[
  \Hess_{(0,0)} \FF_K=(n+1) \vp(K) \left(\begin{array}{cc}(n+2) \cov(K^\circ) & -I_n \\ -I_n & (n+2)\cov(K) \end{array}\right).
\]
\end{proposition}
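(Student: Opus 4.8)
The plan is to compute the second-order Taylor expansion of $\FF_K(x,\xi) = \vp\bigl((K^\circ+\xi)^\circ+x\bigr) = |(K^\circ+\xi)^\circ+x|\cdot|((K^\circ+\xi)^\circ+x)^\circ|$ at the origin. Since translation does not change volume, $|(K^\circ+\xi)^\circ+x| = |(K^\circ+\xi)^\circ|$, and by the polarity identity $((A+x)^\circ)$ is the polar of a translate, one gets $((K^\circ+\xi)^\circ+x)^\circ = ((K^\circ+\xi)+\text{something})$; more precisely, for a convex body $A$ with $0$ in its interior, $(A+x)^\circ$ is obtained from $A^\circ$ by a projective transformation, but the cleanest route is $|((K^\circ+\xi)^\circ+x)^\circ|$. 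So I would write $\FF_K(x,\xi) = f(\xi)\cdot g(x,\xi)$ where $f(\xi) = |(K^\circ+\xi)^\circ|$ and $g(x,\xi) = |((K^\circ+\xi)^\circ+x)^\circ|$, and expand each factor to second order. The key input is Santaló's formula, already quoted in the proof of Proposition \ref{prop:jac}, together with its second-order refinement; differentiating $t\mapsto|(A-tv)^\circ|$ twice one obtains an expression involving $\cov(A^\circ)$, which is the standard computation behind the fact that ellipsoids are the Santaló points' critical configuration.

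Concretely, the first step is to record the second derivative analogue of Santaló's first-variation formula: for a convex body $A$ with barycenter at the origin,
\[
  \left.\frac{d^2}{dt^2}\right|_0 |(A-tv)^\circ| = (n+1)(n+2)\,|A^\circ|\,\langle \cov(A^\circ)v,v\rangle.
\]
This is obtained by writing $|(A-tv)^\circ| = \frac1n\int_{S^{n-1}} h_{A-tv}(u)^{-n}\,du$ (or the analogous radial-function integral for the polar), expanding $h_{A-tv}(u) = h_A(u) - t\langle v,u\rangle$, and using that at a Santaló point the first-order term vanishes while the second-order term reassembles into the covariance matrix of $A^\circ$ after converting the spherical integral back to a body integral. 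The second step is to handle the mixed structure: in $\FF_K$ the inner translation by $\xi$ acts on $K^\circ$ and the outer translation by $x$ acts on $(K^\circ+\xi)^\circ$, so $f(\xi) = |(K^\circ+\xi)^\circ|$ contributes a pure-$\xi$ Hessian block equal to $(n+1)(n+2)\,|K|\,\cov(K)$ (note $(K^\circ+\xi)^\circ$ has volume $|K|$ at $\xi=0$, and the relevant covariance is $\cov((K^\circ)^\circ) = \cov(K)$), while $g(x,\xi) = |((K^\circ+\xi)^\circ+x)^\circ|$ contributes, at $\xi = 0$, the pure-$x$ Hessian $(n+1)(n+2)\,|K^\circ|\,\cov(K^\circ)$ since $((K^\circ)^\circ+x)^\circ = (K+x)^\circ$ and its second variation in $x$ involves $\cov(K^\circ)$.

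The main obstacle, and the only genuinely delicate point, is the off-diagonal block: the cross-derivative $\partial_x\partial_\xi \FF_K$. Here I expect the product rule to give $\partial_x f\cdot \partial_\xi g + \partial_\xi f\cdot \partial_x g + f\cdot \partial_x\partial_\xi g$ evaluated at $0$; since $f$ does not depend on $x$, the first term vanishes, and the remaining two combine. The first-order factors $\partial_\xi f(0) = -(n+1)|K|\,\GG_K = 0$ and $\partial_x g(0) = -(n+1)|K^\circ|\,\GG_{K^\circ} = 0$ vanish at a critical point by Proposition \ref{prop:jac}, so the entire off-diagonal block reduces to $f(0)\cdot\partial_x\partial_\xi g(0,0) = |K^\circ|\cdot\partial_x\partial_\xi\,|((K^\circ+\xi)^\circ+x)^\circ|$. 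Computing this mixed second derivative requires differentiating the polar-volume functional once in the "inner" translation of $K^\circ$ and once in the "outer" translation; I would do this by writing the support function $h_{(K^\circ+\xi)^\circ+x}$, noting that the polar of $(K^\circ+\xi)^\circ+x$ has support function expressible via the radial function of $(K^\circ+\xi)^\circ$ shifted by $x$, and extracting the bilinear-in-$(x,\xi)$ coefficient. After converting spherical integrals back to body integrals and using $\int_{K}x_i\,dx = |K|\,(\GG_K)_i = 0$, the cross term collapses to a multiple of the identity; matching the normalization $(n+1)\vp(K)$ pulled out front forces the coefficient to be $-1$, giving the $-I_n$ blocks. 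Once all three blocks are in hand, factoring out $(n+1)\vp(K)$ and using $\vp(K) = |K|\,|K^\circ| = \vp(K^\circ)$ to symmetrize the scalar factors on the diagonal yields exactly the stated matrix. I expect the diagonal blocks to follow routinely from the one-variable second-variation formula, so the crux is setting up and evaluating the mixed partial cleanly.
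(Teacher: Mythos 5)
Your plan is essentially the paper's own proof: factor $\FF_K(x,\xi)=f(\xi)\,g(x,\xi)$ with $f(\xi)=|(K^\circ+\xi)^\circ|$ and $g(x,\xi)=|((K^\circ+\xi)^\circ+x)^\circ|$, apply the product rule, note that at a critical point the first-order factors kill the cross products of first derivatives, and use Santal\'o's formula (and its second-order refinement obtained from the spherical-integral representation of polar volume, then converted back to a body integral) to produce the two $(n+2)\cov(\cdot)$ blocks. Two small corrections. First, $f(0)=|(K^\circ)^\circ|=|K|$, not $|K^\circ|$. Second, the remark that ``matching the normalization $(n+1)\vp(K)$ forces the coefficient to be $-1$'' is not an argument; you must actually compute the mixed partial. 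Fortunately it is short with your setup: $\partial_x g(0,\xi)=-\int_{S^{n-1}}\rho_{K^\circ+\xi}(u)^{n+1}u\,du=-(n+1)\!\int_{K^\circ+\xi}x\,dx=-(n+1)|K^\circ|(\GG_{K^\circ}+\xi)$, which at the critical point equals $-(n+1)|K^\circ|\,\xi$; differentiating in $\xi$ and multiplying by $f(0)=|K|$ gives the $-(n+1)\vp(K)\,I_n$ block. The paper reaches the same block a touch more slickly by inserting Santal\'o's formula into the inner $t$-derivative so that the expression becomes $-(n+1)\,\vp(K^\circ+s\eta)\,\langle\GG_{K^\circ+s\eta},v\rangle$ and then differentiating in $s$, but the two computations are the same in substance.
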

%%%%%%%%%%%%%%%%%%%%%%%%%%%%%%%%%%%%%%%

\begin{proof}
First observe that for any $v,w \in \R^n$
\begin{align*}
\Hess_{(0,0)} \, \FF_K[(v,0),(w,0)]&=\left.\frac{d}{dt}\right|_0 \left.\frac{d}{ds}\right|_0 \FF_K(tv+sw,0)\\
(\text{using the previous proposition})&=- (n+1) \cdot \left.\frac{d}{dt}\right|_0 \vp(K+tv) \cdot \left\langle {\mathbb G}_{(K+tv)^\circ},w\right\rangle\\
&=- (n+1) \cdot |K| \cdot \left\langle \left.\frac{d}{dt}\right|_0 [|(K+tv)^\circ| \cdot {\mathbb G}_{(K+tv)^\circ}],w\right\rangle.
\end{align*}
Now, using the radial function $\rho_A$ of a convex body $A \in \mathcal{K}_0^n$, the support function $h_{A^\circ}$ of its polar set, and the fact that $\rho_A=1/h_{A^\circ}$ we compute
\begin{align*}
  \left.\frac{d}{dt}\right|_0 [|(K+tv)^\circ| \cdot {\mathbb G}_{(K+tv)^\circ}]
    &= \left.\frac{d}{dt}\right|_0 \int_{(K+tv)^\circ} x dx\\
    &= \left.\frac{d}{dt}\right|_0 \int_{S^{n-1}} \int_0^{\rho_{(K+tv)^\circ}(u)}(ru) \, r^{n-1}dr du\\
    &= \frac{1}{n+1} \cdot \left.\frac{d}{dt}\right|_0 \int_{S^{n-1}} \rho_{(K+tv)^\circ}(u)^{n+1} u \, du\\
    &= \frac{1}{n+1} \cdot \left.\frac{d}{dt}\right|_0 \int_{S^{n-1}} \frac{u}{h_{K+tv}(u)^{n+1}} du\\
    &= \frac{1}{n+1} \cdot \left.\frac{d}{dt}\right|_0 \int_{S^{n-1}} \frac{1}{(h_K(u)+t\langle v,u \rangle)^{n+1}} \, udu\\
    &= -\int_{S^{n-1}} \frac{\langle v,u \rangle}{h_K(u)^{n+2}} \, udu.
\end{align*}
Therefore
\begin{align*}
  \Hess_{(0,0)} \, \FF_K((v,0),(w,0))
    &= (n+1) \cdot |K| \cdot \int_{S^{n-1}} \frac{\langle v,u \rangle \cdot \langle w,u \rangle}{h_K(u)^{n+2}} \, du\\
    &= (n+1) \cdot (n+2) \cdot |K| \cdot \int_{K^\circ} \langle v,x \rangle \cdot \langle w,x \rangle \, dx\\
    &= (n+1) \cdot \vp(K) \cdot (n+2) \cdot v^t \cov(K^\circ) \ w
\end{align*}
as claimed. The computation of the dual term
\[
  \Hess_{(0,0)} \, \FF_K((0,\xi),(0,\eta))=(n+1) \cdot \vp(K) \cdot (n+2) \cdot \xi^t \cov(K) \ \eta.
\]
for any $\xi,\eta \in \R^n$ is similar. It remains to compute the mixed term as follows:
\begin{align*}
  \Hess_{(0,0)} \, \FF_K((v,0),(0,\eta))
    &= \left.\frac{d}{ds}\right|_0 \left(|(K^\circ+s\eta)^\circ| \cdot \left.\frac{d}{dt}\right|_0 |((K^\circ+s\eta)^\circ +tv)^\circ|\right)\\
    &= -(n+1)\left.\frac{d}{ds}\right|_0 \left(\vp(K^\circ+s\eta) \cdot \langle {\mathbb G}_{K^\circ+s\eta},v\rangle \right)\\
  (\text{the origin being a critical point})
    &= -(n+1) \cdot \vp(K^\circ) \cdot \left.\frac{d}{ds}\right|_0 \langle {\mathbb G}_{K^\circ+s\eta},v\rangle \\
    &= -(n+1) \cdot \vp(K) \cdot \langle \eta,v\rangle.
\end{align*}
This completes the proof.
\end{proof}

%%%%%%%%%%%%%%%%%%%%%%%%%%%%%%%%%%%%%%%
\begin{proposition}\label{prop:equiv}
%%%%%%%%%%%%%%%%%%%%%%%%%%%%%%%%%%%%%%%
Suppose that the origin is a critical point of $\FF_K$. Then
\[
  \Hess_{(0,0)} \FF_K\geq 0 \Longleftrightarrow \cov(K^\circ)\geq \frac{1}{(n+2)^2}\cdot\cov(K)^{-1}.
\]
Moreover
\begin{equation}\label{expression det}
  \det(\Hess_{(0,0)} \FF_K)=(n+1)^n\vp(K)^{n}\det[(n+2)^2\cov(K^\circ)\cov(K)-I_n].
\end{equation}
\end{proposition}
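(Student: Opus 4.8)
The plan is to reduce everything to a statement about the block matrix
\[
  M \coloneqq \begin{pmatrix} (n+2)\cov(K^\circ) & -I_n \\ -I_n & (n+2)\cov(K) \end{pmatrix},
\]
since by Proposition~\ref{prop:hess} the Hessian is the positive scalar $(n+1)\vp(K)$ times $M$, and positive semi-definiteness as well as the determinant are governed by $M$ alone (up to the scalar factor $[(n+1)\vp(K)]^{2n}$ in the determinant, as $M$ is $2n\times 2n$). First I would record that $\cov(K)$ and $\cov(K^\circ)$ are symmetric positive definite matrices, being Gram-type matrices of the nondegenerate quadratic forms $\xi\mapsto\int_K\langle \xi,x\rangle^2\,dx$ and $v\mapsto\int_{K^\circ}\langle v,x\rangle^2\,dx$; in particular $(n+2)\cov(K)$ is invertible, so the Schur complement is available.

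The key step is the Schur complement identity. With $A=(n+2)\cov(K^\circ)$, $D=(n+2)\cov(K)$, $B=C=-I_n$, and $D$ positive definite, the block matrix $M$ is positive semi-definite if and only if the Schur complement $A - B D^{-1} C = (n+2)\cov(K^\circ) - [(n+2)\cov(K)]^{-1}$ is positive semi-definite. Unwinding the scalar $(n+2)$, this is exactly the inequality $\cov(K^\circ)\geq \frac{1}{(n+2)^2}\cov(K)^{-1}$, which proves the equivalence (and Theorem~\ref{th:main} follows, since a local minimizer has positive semi-definite Hessian, using Proposition~\ref{prop:jac} to know the origin is critical). For the determinant formula I would use the companion identity $\det M = \det(D)\cdot\det(A - B D^{-1} C)$. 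Here $\det D = (n+2)^n\det\cov(K)$ and
\[
  \det\!\big(A - B D^{-1}C\big) = \det\!\Big((n+2)\cov(K^\circ) - \tfrac{1}{n+2}\cov(K)^{-1}\Big)
  = \frac{\det\big[(n+2)^2\cov(K^\circ)\cov(K) - I_n\big]}{(n+2)^n\det\cov(K)},
\]
where the last step factors $\frac{1}{n+2}\cov(K)^{-1}$ on the right out of the determinant. Multiplying, the powers of $(n+2)$ and the factors $\det\cov(K)$ cancel, leaving $\det M = \det[(n+2)^2\cov(K^\circ)\cov(K)-I_n]$, and then $\det(\Hess_{(0,0)}\FF_K) = [(n+1)\vp(K)]^{2n}\det M$; absorbing the square gives the stated $(n+1)^{2n}\vp(K)^{2n}$, which I should reconcile with the paper's $(n+1)^n\vp(K)^n$ — here one must note that $\Hess$ is a $2n\times 2n$ matrix, so the exponent is $2n$; if the paper intends $(n+1)^n\vp(K)^n$ it is a typo for $(n+1)^{2n}\vp(K)^{2n}$, and I would write the formula with exponent $2n$.

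The only subtlety, and the step I would be most careful about, is justifying that $D=(n+2)\cov(K)$ is strictly positive definite rather than merely positive semi-definite — otherwise the Schur complement manipulations are invalid. This follows because $K$ has nonempty interior, so the second-moment form $\xi\mapsto\int_K\langle\xi,x\rangle^2\,dx$ vanishes only at $\xi=0$; the same applies to $K^\circ$. Everything else is the standard Schur complement lemma for symmetric block matrices (e.g.\ $M\succeq 0 \iff D\succ 0 \text{ and } A - BD^{-1}C\succeq 0$ when $D$ is known to be positive definite) plus the multiplicativity of the determinant under the block $LDL^t$ factorization $M = \begin{pmatrix} I & BD^{-1} \\ 0 & I\end{pmatrix}\begin{pmatrix} A - BD^{-1}C & 0 \\ 0 & D\end{pmatrix}\begin{pmatrix} I & 0 \\ D^{-1}C & I\end{pmatrix}$; no hard computation is involved once the positivity of $\cov(K)$ is in hand.
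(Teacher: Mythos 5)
Your proof is correct and takes essentially the same route as the paper: a Schur complement / block triangular congruence of the $2n\times 2n$ matrix $M$. The paper pivots on the upper-left block, taking $P=\begin{pmatrix}I_n & A^{-1}\\ 0 & I_n\end{pmatrix}$ and computing $P^tMP=\begin{pmatrix}A & 0\\ 0 & B-A^{-1}\end{pmatrix}$, which yields $M\geq 0\iff B\geq A^{-1}$; you pivot on the lower-right block and obtain the equivalent $A\geq B^{-1}$, which happens to be exactly the form appearing in the statement, so your choice of block is marginally more direct. The determinant computations agree, using $\det(A)\det(B-A^{-1})=\det(AB-I_n)$ on the paper's side versus the analogous cancellation on yours; both are standard.

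You are also right to flag the discrepancy in the scalar prefactor: since $\Hess_{(0,0)}\FF_K=(n+1)\vp(K)\,M$ with $M$ of size $2n\times 2n$, the scalar comes out of the determinant with exponent $2n$, so the correct statement is $\det(\Hess_{(0,0)}\FF_K)=(n+1)^{2n}\vp(K)^{2n}\det[(n+2)^2\cov(K^\circ)\cov(K)-I_n]$, and the paper's $(n+1)^n\vp(K)^n$ is a typo. Note that the same error reappears in Section~\ref{sec:pent}, where it is claimed that $s(q,b)=\frac19\det(\Hess_{(0,0)}\FF_P)$; with the corrected exponent the relation reads $s(q,b)=\frac{1}{81\,|P|^4}\det(\Hess_{(0,0)}\FF_P)$. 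Fortunately the argument there only uses the sign of $\det(\Hess_{(0,0)}\FF_P)$, and the multiplicative constant is positive in both versions, so Theorem~\ref{th:pentagons} is unaffected.
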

%%%%%%%%%%%%%%%%%%%%%%%%%%%%%%%%%%%%%%%

As observed in the introduction, the first part of this statement implies Theorem \ref{th:main}. The second part will be useful in the last section.

\begin{proof}
Just remark that if $A$ and $B$ are two symmetric positive definite matrices, and if we set
\[
  M\coloneqq \left(\begin{array}{cc}A & -I_n \\ -I_n & B \end{array}\right) \ \ \text{and} \ \ P\coloneqq\left(\begin{array}{cc}I_n & A^{-1} \\ 0 & I_n \end{array}\right),
\]
then $P$ being invertible the following holds true:
\[
  M\geq 0 \ \ \Longleftrightarrow P^t \ M \ P=\left(\begin{array}{cc}A & 0 \\ 0 & B-A^{-1} \end{array}\right)\geq 0 \ \Longleftrightarrow B \geq A^{-1}.
\]
Since $\det P=1$,
\[
  \det M = \det \left(\begin{array}{cc}A & 0 \\ 0 & B-A^{-1} \end{array}\right)=\det(A)\det(B-A^{-1})=\det(AB-I_n).\qedhere
\]
\end{proof}

%%%%%%%%%%%%%%%%%%%%%%%%%%%%%%%%%%%%%%%%%%%%
\section{Projective transformations and volume product}\label{sec:proj}
%%%%%%%%%%%%%%%%%%%%%%%%%%%%%%%%%%%%%%%%%%%%

The purpose of this section is to show that the volume product on the space of projective images of a given convex body $K$ factors through the map $\FF_K$. Therefore the functional $\FF_K$ captures all the relevant information about variational properties of the volume product on the space of projective images of a given convex body.

First recall some material about projective transformations (also called homographies).
Consider the real projective space $\RP^n=(\R^{n+1}\setminus \{0\})/\R^*$.
We will make use of the projective coordinates $[x_0,\ldots,x_n]$ for points in $\RP^n$. Recall that a \textit{projective transformation} is a diffeomorphism of $\RP^n$ induced by a linear isomorphism of $\R^{n+1}$ as follows: given $A=(a_{ij}) \in \GL_{n+1}(\R)$ the induced projective transformation $\proj{A}$ is
\[
  \proj{A}[x_0,\ldots , x_n]=\left[\sum_{i=0}^n a_{0i}x_i,\ldots,\sum_{i=0}^n a_{ni}x_i\right].
\]
This allows to identify the group of projective transformations $\Proj(\RP^n)$ with the projective linear group $\PGL_n(\R)=\GL_{n+1}(\R)/(\R^\ast\cdot I_{n+1})$ and defines a natural topology on it. Here $I_{n+1}$ denotes the identity matrix.
We will use the \textit{affine coordinate chart}
\begin{eqnarray*}
  F: \R^n            & \longrightarrow & \RP^n\setminus H_0        \\
  y=(y_1,\ldots,y_n) & \longmapsto     & [1,y]=[1,y_1,\ldots,y_n],
\end{eqnarray*}
where $H_0=\{[x_0,x_1,\ldots,x_n] \mid x_0=0\}$. Its inverse map is $F^{-1}[x_0,\ldots,x_n]=(x_1/x_0,\ldots,x_n/x_0)$.

Given two nested open balls centered at the origin $B_1\varsubsetneq B_2$, we define the open subset of $\mathcal{K}_0^n$
\[
  \mathcal{V}\coloneqq\{K \in \mathcal{K}_0^n \mid \cl({B}_1) \subset \intr(K)\subset K \subset B_2\}
\]
where $\cl$ and $\intr$ refer to the closure and the interior respectively.

%%%%%%%%%%%%%%%%%%%%%%%%%%%%%%%%%%%%%%%
\begin{proposition}\label{prop:def}
%%%%%%%%%%%%%%%%%%%%%%%%%%%%%%%%%%%%%%%
There exists an open neighbourhood $\mathcal{U}$ of the identity in $\Proj(\RP^n)$ such that $\forall K \in \mathcal{V}$ and $\forall \pi \in \mathcal{U}$ we have $\pi \circ F(K)\subset \RP^n\setminus H_0$ and $F^{-1}\circ \pi \circ F(K) \in \mathcal{K}_0^n$.
\end{proposition}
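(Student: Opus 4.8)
The plan is to reduce the whole statement to a single fixed compact set, exploiting that every $K\in\mathcal{V}$ is trapped between the two fixed balls $B_1\subset B_2$. First I would observe that $K\subset B_2$ implies $F(K)\subset L$, where $L=F(\cl(B_2))$ is a compact subset of $\RP^n\setminus H_0$ that does \emph{not} depend on $K$. Since the identity maps $L$ into the open set $\RP^n\setminus H_0$ and the action $\Proj(\RP^n)\times\RP^n\to\RP^n$ is continuous, a routine compactness argument (cover $L$ by finitely many open sets on which a sufficiently small perturbation of the identity still lands in $\RP^n\setminus H_0$) produces an open neighbourhood $\mathcal{U}_1$ of $\Id$ with $\pi(L)\subset\RP^n\setminus H_0$ for all $\pi\in\mathcal{U}_1$. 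This already gives the first assertion, since $\pi\circ F(K)\subset\pi(L)$; moreover it shows that $\varphi_\pi=F^{-1}\circ\pi\circ F$ is well defined and smooth on the open set $D_\pi=\{y\in\R^n:\pi(F(y))\notin H_0\}$, which contains $\cl(B_2)$ hence $K$, and that $\varphi_\pi$ is a diffeomorphism from $D_\pi$ onto an open subset of $\R^n$ with inverse $F^{-1}\circ\pi^{-1}\circ F$; in particular $\varphi_\pi$ is an open map, injective on $K$.

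Next I would show that $\varphi_\pi(K)$ is a convex body, which is the classical statement that a projective transformation defined on the whole of a convex body maps it to a convex body. The argument I have in mind is the standard segment argument: projective transformations send lines to lines, so for $p,q\in K$ the image of the segment $[p,q]\subset K$ under $\varphi_\pi$ lies on the affine line through $\varphi_\pi(p)$ and $\varphi_\pi(q)$, and writing $\varphi_\pi((1-t)p+tq)=(1-s(t))\,\varphi_\pi(p)+s(t)\,\varphi_\pi(q)$ one checks that $s$ is a ratio of two affine functions of $t$ whose denominator has no zero on $[0,1]$ (that is exactly where $\varphi_\pi$ is defined); hence $s$ is monotone on $[0,1]$ and, since $s(0)=0$ and $s(1)=1$, satisfies $s([0,1])=[0,1]$. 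Thus $\varphi_\pi([p,q])=[\varphi_\pi(p),\varphi_\pi(q)]\subset\varphi_\pi(K)$, so $\varphi_\pi(K)$ is convex; it is compact as a continuous image of a compact set, and it has nonempty interior because $\varphi_\pi$ is an open map and $\intr(K)\neq\emptyset$. Hence $\varphi_\pi(K)$ is a convex body for every $K\in\mathcal{V}$ and every $\pi\in\mathcal{U}_1$.

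It remains to place the origin in the interior of $\varphi_\pi(K)$, and here is where the uniformity in $K$ is used. The point $\varphi_\pi^{-1}(0)=F^{-1}(\pi^{-1}(F(0)))$ depends continuously on $\pi$ near $\Id$ (note that $\pi^{-1}(F(0))$ is close to $F(0)=[1,0,\ldots,0]\notin H_0$, so $F^{-1}$ may be applied) and equals $0$ at $\pi=\Id$; therefore there is an open neighbourhood $\mathcal{U}\subset\mathcal{U}_1$ of $\Id$ on which $\varphi_\pi^{-1}(0)\in B_1$. Since $B_1$ is the \emph{same} ball for every $K\in\mathcal{V}$, this $\mathcal{U}$ does not depend on $K$. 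For $\pi\in\mathcal{U}$ and $K\in\mathcal{V}$ we then have $0\in\varphi_\pi(B_1)$, and $\varphi_\pi(B_1)$ is an open subset of $\varphi_\pi(K)$ because $\varphi_\pi$ is an open map and $B_1\subset\cl(B_1)\subset\intr(K)\subset K$; hence $0\in\intr(\varphi_\pi(K))$, i.e. $F^{-1}\circ\pi\circ F(K)=\varphi_\pi(K)\in\mathcal{K}_0^n$. Taking this $\mathcal{U}$ finishes the proof.

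The only step that is not pure continuity-and-compactness bookkeeping is the classical convexity fact in the second paragraph, so that is where I would be most careful; on the bookkeeping side, the one thing to monitor is that each neighbourhood of the identity produced along the way can be chosen independently of $K\in\mathcal{V}$, which works precisely because the two balls $B_1\subset B_2$ cutting out $\mathcal{V}$ are fixed beforehand.
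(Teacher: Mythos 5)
Your proof is correct, but it takes a genuinely different route from the paper's. The paper introduces an explicit local parametrization of the projective group near the identity,
\[
\Phi(M,x,\xi)=\proj\begin{pmatrix} 1 & \xi^t\\ x & M \end{pmatrix},
\]
computes $\pi\circ F(y)=[1+\xi^t y,\,x+My]$, and reads off that $\pi\circ F(K)\subset\RP^n\setminus H_0$ is equivalent to $-\xi\in\intr(K^\circ)$ and that $0\in\intr(F^{-1}\circ\pi\circ F(K))$ is equivalent to $-x\in\intr(MK)$; it then produces $\mathcal{U}$ as an explicit $\Phi$-image using $B_2^\circ$ and $B_1$ to make those conditions uniform in $K\in\mathcal{V}$. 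You instead argue abstractly: compactness of $L=F(\cl(B_2))$, continuity of the group action, and openness of $\RP^n\setminus H_0$ give a uniform $\mathcal{U}_1$ avoiding $H_0$; continuity of $\pi\mapsto\varphi_\pi^{-1}(0)$ combined with $\cl(B_1)\subset\intr(K)$ gives the containment of the origin; and you spell out the segment/Möbius-reparametrization argument for why projective images of convex bodies stay convex, a fact the paper simply labels ``automatic''. Your version is cleaner as a standalone proof and does not depend on a choice of chart, while the paper's explicit parametrization $\Phi$ is deliberately set up here because it is reused in the next proposition (it is composed with the change of variables $\alpha$ to build $\Psi$), so the extra explicitness pays off downstream. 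One more note: the paper's chosen set $\{-\xi\in B_2^\circ,\ -x\in MB_1\}$ uses the closed polar $B_2^\circ$, so strictly speaking one should shrink to its interior for $\mathcal{U}$ to be open; your compactness formulation sidesteps this small wrinkle automatically.
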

%%%%%%%%%%%%%%%%%%%%%%%%%%%%%%%%%%%%%%%

\begin{proof}
Let us introduce the following local parametrization
\begin{eqnarray*}
  \Phi: \GL_n(\R) \times \R^n \times \R^n
    &\to& \Proj(\RP^n)\simeq \PGL_n(\R)\\
  (M,x,\xi)
    &\mapsto&
      \proj\begin{pmatrix}
        1 & \xi^t\\
        x & M
      \end{pmatrix}.
\end{eqnarray*}
Clearly $\Phi$ is a diffeomorphism onto its image which is an open neighbourhood of the identity.
We easily compute for any $\pi=\Phi(M,x,\xi)$ and every $y \in \R^n$ that
\[
  \pi \circ F(y)=[1+\xi^t y,x+My].
\]
So $\pi \circ F(y) \in \RP^n \setminus H_0$ if and only if $1+\xi^ty\neq 0$, and the condition that $\pi \circ F(K)\subset \RP^n\setminus H_0$ amounts to the condition that $-\xi \in \intr(K^\circ)$. Now if $-\xi \in \intr(K^\circ)$ the well-defined set $F^{-1}\circ \pi \circ F(K)$ is automatically a convex body, that contains the zero in its interior if and only if $-x \in \intr(MK)$.
Therefore the open set $\mathcal{U}\coloneqq\Phi\left(\{(M,x,\xi) \mid -\xi \in B_2^\circ \, \, \text{and} \, \, -x \in MB_1\}\right)$ satisfies the conclusion of the proposition.
\end{proof}
Now we will reparametrize this open neighbourhood $\mathcal{U}$ in order to show that the volume product of projective images of $K$ factors through the map $\FF_K$.

%%%%%%%%%%%%%%%%%%%%%%%%%%%%%%%%%%%%%%%
\begin{proposition}\label{prop:proj}
%%%%%%%%%%%%%%%%%%%%%%%%%%%%%%%%%%%%%%%
There exists an open neighbourhood $\mathcal{W}$ of $(I_n,0,0)$ in $\GL_n(\R) \times \R^n \times \R^n$ and a map $\Psi : \mathcal{W} \to \Proj(\RP^n)$ such that $\Psi(I_n,0,0)=\Id_{\RP^n}$, $\Psi$ is a diffeomorphism onto its image, and the following holds:
\[
  \vp(F^{-1}\circ \Psi(M,x,\xi)\circ F(K))= \FF_K(x,\xi)
\]
for any convex body $K \in \mathcal{V}$ and for any $(M,x,\xi)\in \Psi^{-1}(\mathcal{U})$.
\end{proposition}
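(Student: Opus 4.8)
The plan is to write $\Psi$ down explicitly and to identify, in the affine chart $F$, which convex body the transformation $\Psi(M,x,\xi)$ builds out of $K$. The conceptual core is the elementary fact that \emph{the polar of a translate of $K^\circ$ is already a projective image of $K$}: whenever $-\xi\in\intr(K^\circ)$,
\[
  (K^\circ+\xi)^\circ=F^{-1}\circ\proj\begin{pmatrix}1&\xi^t\\0&I_n\end{pmatrix}\circ F(K)=\Bigl\{\,\tfrac{y}{1+\langle\xi,y\rangle}\ \Big|\ y\in K\,\Bigr\}.
\]
I would prove this first. The condition $-\xi\in\intr(K^\circ)$ keeps $1+\langle\xi,y\rangle$ positive on the compact set $K$, so the projective map $\psi_\xi\colon y\mapsto y/(1+\langle\xi,y\rangle)$, which read in the chart is precisely $F^{-1}\circ\proj\left(\begin{smallmatrix}1&\xi^t\\0&I_n\end{smallmatrix}\right)\circ F$, is well defined on $K$. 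Writing $h_{K^\circ}$ for the support function of $K^\circ$ — equivalently the gauge of $K$, so that $K=\{y\mid h_{K^\circ}(y)\le 1\}$ — and using $(K^\circ+\xi)^\circ=\{y\mid h_{K^\circ}(y)+\langle\xi,y\rangle\le 1\}$, a direct gauge computation (absorbing the factor $1+\langle\xi,\cdot\rangle$ via positive homogeneity) shows that $\psi_\xi$ carries $K$ bijectively onto $(K^\circ+\xi)^\circ$.

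Next I would set
\[
  \Psi(M,x,\xi)\coloneqq\proj\begin{pmatrix}1&\xi^t\\Mx&M+Mx\,\xi^t\end{pmatrix},
\]
so that $\Psi(I_n,0,0)=\proj(I_{n+1})=\Id_{\RP^n}$. A one-line block multiplication gives the factorisation
\[
  \begin{pmatrix}1&\xi^t\\Mx&M+Mx\,\xi^t\end{pmatrix}=\begin{pmatrix}1&0\\Mx&I_n\end{pmatrix}\begin{pmatrix}1&0\\0&M\end{pmatrix}\begin{pmatrix}1&\xi^t\\0&I_n\end{pmatrix},
\]
so in the chart $F$ the transformation $\Psi(M,x,\xi)$ acts as $y\mapsto My/(1+\langle\xi,y\rangle)$ followed by the translation by $Mx$. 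Applying this to $K$ and invoking the identity of the first paragraph,
\[
  F^{-1}\circ\Psi(M,x,\xi)\circ F(K)=Mx+M\cdot(K^\circ+\xi)^\circ=M\bigl((K^\circ+\xi)^\circ+x\bigr).
\]
Since $\vp(TA)=\vp(A)$ for all $T\in\GL_n(\R)$ (because $|TA|\,|(TA)^\circ|=|\det T|\,|A|\cdot|\det T|^{-1}|A^\circ|$), this yields $\vp(F^{-1}\circ\Psi(M,x,\xi)\circ F(K))=\vp((K^\circ+\xi)^\circ+x)=\FF_K(x,\xi)$. Using $Mx$ rather than $x$ in the translation block is exactly what makes the output equal $\FF_K$ at the same pair $(x,\xi)$.

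It remains to dispatch the soft points. For the diffeomorphism claim, write $\Psi=\Phi\circ G$ with $G(M,x,\xi)=(M+Mx\,\xi^t,\,Mx,\,\xi)$; the quadratic term $Mx\,\xi^t$ drops out to first order, so $d_{(I_n,0,0)}G=\Id$, and since $\Phi$ is a diffeomorphism onto an open neighbourhood of the identity, the inverse function theorem produces a neighbourhood $\mathcal{W}$ of $(I_n,0,0)$ on which $\Psi$ is a diffeomorphism onto its open image. For the compatibility of domains, if $(M,x,\xi)\in\Psi^{-1}(\mathcal{U})$ then Proposition \ref{prop:def} gives $F^{-1}\circ\Psi(M,x,\xi)\circ F(K)\in\mathcal{K}_0^n$, so the left-hand side of the asserted identity is defined; as that body equals $M((K^\circ+\xi)^\circ+x)$, the body $(K^\circ+\xi)^\circ+x$ lies in $\mathcal{K}_0^n$ as well, whence $(x,\xi)\in U_K$ and $\FF_K(x,\xi)$ is defined. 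The one step deserving real care is the polarity identity of the first paragraph — and, in the chart computation, keeping the transposes straight while pushing the linear part $M$ through polar duality; the factorisation, the $\GL_n(\R)$-invariance of $\vp$, and the inverse function theorem are routine.
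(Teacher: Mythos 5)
Your proof is correct and follows essentially the same route as the paper: you use the same map $\Psi$ (your matrix $\bigl(\begin{smallmatrix}1&\xi^t\\Mx&M+Mx\xi^t\end{smallmatrix}\bigr)$ is the paper's triple product $\proj\bigl(\begin{smallmatrix}1&0\\0&M\end{smallmatrix}\bigr)\proj\bigl(\begin{smallmatrix}1&0\\x&I_n\end{smallmatrix}\bigr)\proj\bigl(\begin{smallmatrix}1&\xi^t\\0&I_n\end{smallmatrix}\bigr)$ multiplied out), the same polarity identity for $(K^\circ+\xi)^\circ$, and the same $\GL_n$-invariance of $\vp$. The only cosmetic difference is that for the diffeomorphism claim you invoke the inverse function theorem on $G$ (the paper's $\alpha$), whereas the paper writes down the inverse of $\alpha$ explicitly — both are fine.
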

%%%%%%%%%%%%%%%%%%%%%%%%%%%%%%%%%%%%%%%

Therefore, in order to study the variational properties of the volume product of projective images of a convex body $K \in \mathcal{K}_0^n$, it is enough to study the associated functional $\FF_K$.

\begin{proof}
Consider the map
\begin{eqnarray*}
  \alpha: \GL_n(\R) \times \R^n \times \R^n & \to     & \GL_n(\R) \times \R^n \times \R^n \\
  (N,y,\eta)                                        & \mapsto & \left(N(I_n+y\eta^t),Ny,\eta\right).
\end{eqnarray*}
The map $\alpha$ fixes points of the form $(M,0,0)$ for any $M \in \GL_n(\R)$, and there exists an open neighbourhood $\mathcal{W}$ of $(I_n,0,0)$ in $\GL_n(\R) \times \R^n \times \R^n$ such that the restriction of $\alpha$ to $\mathcal{W}$ is a diffeomorphism onto its image whose inverse is $\alpha^{-1}(M,x,\xi)=\Big(M-x\xi^t,(M-x\xi^t)^{-1}x,\xi\Big)$.
Then define the map
\begin{eqnarray*}
  \Psi: \GL_n(\R) \times \R^n \times \R^n
    &\to&
      \Proj(\RP^n)\\
  (N,y,\eta)
    &\mapsto&
      \proj\begin{pmatrix} 1 & 0\\ 0 & N \end{pmatrix}
        \cdot
      \proj\begin{pmatrix} 1 & 0\\ y & I_n \end{pmatrix}
        \cdot
      \proj\begin{pmatrix} 1 & \eta^t\\ 0 & I_n \end{pmatrix}
  .
\end{eqnarray*}
Because
\[
  \Phi\circ \alpha (N,y,\eta)
  =
  \proj \begin{pmatrix} 1 & \eta^t\\ Ny & N(I_n+y\eta^t) \end{pmatrix},
\]
while
\[
  \proj\begin{pmatrix} 1 & 0\\ 0 & N \end{pmatrix}
    \cdot
  \proj\begin{pmatrix} 1 & 0\\ y & I_n \end{pmatrix}
    \cdot
  \proj\begin{pmatrix} 1 & \eta^t\\ 0 & I_n \end{pmatrix}
  =
  \proj\begin{pmatrix} 1 & 0\\ Ny & N \end{pmatrix}
    \cdot
  \proj\begin{pmatrix} 1 & \eta^t\\ 0 & I_n \end{pmatrix}
  =
  \proj\begin{pmatrix} 1 & \eta^t\\ Ny & N(I_n+y\eta^t) \end{pmatrix},
\]
we have that $\Psi=\Phi\circ \alpha$ which implies that the restriction of $\Psi$ to $\mathcal{W}$ is a diffeomorphism onto its image. By construction $\Psi(I_n,0,0)=\Id_{\RP^n}$.
Still denote by $\Psi : \mathcal{W} \to \Proj(\RP^n)$ the restriction of the map $\Psi$ to $\mathcal{W}$.
Now for any convex body $K \in \mathcal{V}$ and for any $(M,x,\xi)\in \Psi^{-1}(\mathcal{U})$ the following holds:
\begin{align*}
  F^{-1}
    \circ
  \Psi(M,x,\xi)
    \circ
  F(K)
  &=
    F^{-1}
      \circ
    \proj\begin{pmatrix} 1 & 0\\ 0 & M \end{pmatrix}
      \cdot
    \proj\begin{pmatrix} 1 & 0\\ x & I_n \end{pmatrix}
      \cdot
    \proj\begin{pmatrix} 1 & \xi^t\\ 0 & I_n \end{pmatrix} \circ F(K)\\
  &=
    F^{-1}
      \circ
    \proj\begin{pmatrix} 1 & 0\\ 0 & M \end{pmatrix}
      \cdot
    \Psi(I_n,x,\xi)
      \circ
    F(K)\\
  &=
    F^{-1}
      \circ
    \proj\begin{pmatrix} 1 & 0\\ 0 & M \end{pmatrix}
      \circ
    F
      \circ
    F^{-1}
      \circ
    \Psi(I_n,x,\xi)
      \circ
    F(K).
\end{align*}
But for all $y\in \R^n$
\[
  F^{-1} \circ \proj\begin{pmatrix} 1 & 0\\ 0 & M\end{pmatrix} \circ F(y) = My,
\]
which implies using the invariance of the volume product by the linear group that
\[
  \vp(F^{-1}\circ \Psi(M,x,\xi)\circ F(K))= \vp(F^{-1}\circ \Psi(I_n,x,\xi)\circ F(K)).
\]

Now observe that
\[
  F^{-1}\circ \Psi(I_n,x,\xi)\circ F(K)=(K^\circ+\xi)^\circ+x.
\]
Indeed write $F^{-1}\circ \Psi(I_n,x,\xi)\circ F=F^{-1}\circ \Psi(I_n,x,0)\circ F\circ F^{-1}\circ \Psi(I_n,0,\xi)\circ F$. First $F^{-1}\circ \Psi(I_n,x,0)\circ F(y)=x+y$ while $F^{-1}\circ \Psi(I_n,0,\xi)\circ F(y)=y/(1+\xi^t y)$. By a straightforward computation it implies that $F^{-1}\circ \Psi(I_n,0,\xi)\circ F(K)=(K^\circ +\xi)^\circ$.

It follows that $\vp(F^{-1}\circ \Psi(M,x,\xi)\circ F(K))=\vp\left((K^\circ+\xi)^\circ+x\right)$ as claimed.
\end{proof}

%%%%%%%%%%%%%%%%%%%%%%%%%%%%%%%%%%%%%%%%%%%%
\section{A family of critical pentagons}\label{sec:pent}
%%%%%%%%%%%%%%%%%%%%%%%%%%%%%%%%%%%%%%%%%%%%

Fix two numbers $b>0$ and $q \in [-\frac{1}{b},b]$, and set $r = \sqrt{1+q^2} $ and $c= \frac{1+bq}{\sqrt{1+q^2}}$.

We consider the pentagon $P_{q,b}\subset \R^2$ whose vertices are $(-\frac{1}{b},0)$, $(q,\pm r)$ and $(b,\pm c)$. By construction $P = P_{q,b}$ is symmetric with respect to the $x$-axis:

\begin{center}
  \includegraphics[scale=.84]{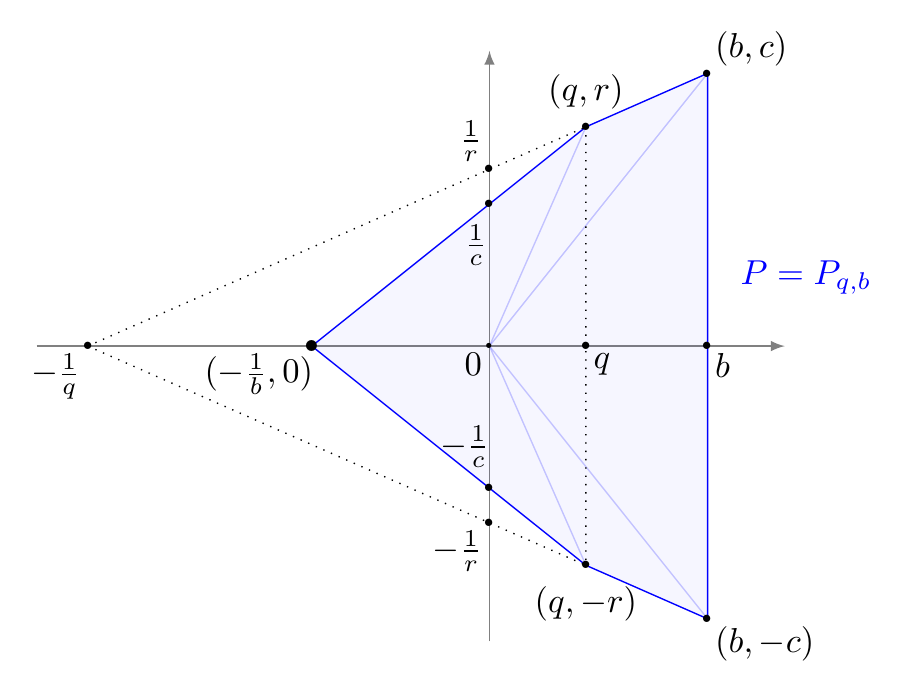}
\end{center}

\begin{lemma}\label{lemma:dualP}
  The dual of $P$ is $-P$.
\end{lemma}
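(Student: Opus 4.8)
The plan is to use the standard description of the polar of a polytope as an intersection of half-planes and to check that this intersection is exactly $-P$. Write $V=\{(-\tfrac1b,0),\,(q,r),\,(q,-r),\,(b,c),\,(b,-c)\}$ for the vertex set of $P=\operatorname{conv}(V)$, so that
\[
  P^\circ=\bigcap_{v\in V}H_v,\qquad H_v\coloneqq\{y\in\R^2\mid\langle v,y\rangle\le 1\}.
\]
Since $-P=\operatorname{conv}(-V)$ is again a convex polygon, it is enough to establish two inclusions: (i) $-w\in P^\circ$ for every $w\in V$, which gives $-P\subseteq P^\circ$; and (ii) every edge of $-P$ lies on one of the lines $\partial H_v$, which — together with $-P\subseteq H_v$ for all $v$ coming from (i) — identifies each $H_v$ as a half-plane bounding $-P$ along one of its edges, and hence $P^\circ=\bigcap_v H_v\subseteq -P$.

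The computational core is the table of values $\langle v,w\rangle$ for $v,w\in V$, which I would compute using the two defining identities $r^2=1+q^2$ and $rc=1+bq$ that follow from $r=\sqrt{1+q^2}$ and $c=(1+bq)/\sqrt{1+q^2}$. Both claims then read off this table. For (i): every entry is $\ge -1$, the only non-obvious cases being $-q/b\ge -1$ (equivalent to $q\le b$), $1+2bq\ge -1$ (equivalent to $q\ge -1/b$), and $b^2-c^2\ge -1$ (which reduces to $(b-q)^2\ge 0$ after clearing denominators); hence $\langle v,-w\rangle\le 1$ for all $v,w\in V$, i.e. $-V\subseteq P^\circ$. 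For (ii): in each row of the table the value $-1$ occurs at exactly two positions, and the two corresponding points $-w$ turn out to be the endpoints of an edge of $-P$; letting $v$ run over the five rows produces the five edges of $-P$, each lying on its own line $\partial H_v$.

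The main obstacle is purely bookkeeping: for each vertex $v$ of $P$ one must match the pair $\{w_1,w_2\}$ with $\langle v,w_i\rangle=-1$ to an actual edge of $-P$, and check that the five resulting vertex/edge incidences are distinct so that all five half-planes $H_v$ are used. I would streamline this by exploiting the symmetry of $P$ about the $x$-axis and by writing down the vertices of $-P$ in cyclic order, namely $(\tfrac1b,0),\,(-q,r),\,(-b,c),\,(-b,-c),\,(-q,-r)$, so that its edges are explicit. One point to dispatch separately: when $q=-1/b$ or $q=b$ two points of $V$ coincide and $P$ is a triangle rather than a pentagon; there the same reasoning still applies verbatim (three of the $H_v$ becoming redundant), or one simply extends the identity from $q\in(-1/b,b)$ to the endpoints by continuity of $K\mapsto K^\circ$ on $\mathcal{K}_0^2$.
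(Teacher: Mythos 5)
Your proof is correct and it reaches the same conclusion, but by the \emph{dual} bookkeeping to the paper's. The paper invokes the vertex--facet duality for polygons in the direction ``each side of $P$ (on the line $Ax+By=1$) gives a vertex $(A,B)$ of $P^\circ$,'' reads off the five vertices of $P^\circ$ from the five sides of $P$ (three up to the reflection symmetry), and observes that they are exactly the negatives of the vertices of $P$. You instead start from the other half of the duality, writing $P^\circ = \bigcap_{v\in V} H_v$ with one half-plane per vertex of $P$, and establish the identity $P^\circ=-P$ by a double inclusion, driven by the $5\times 5$ table of inner products $\langle v,w\rangle$ with $v,w\in V$. Your computations check out: the three nontrivial inequalities reduce to $q\le b$, $bq\ge -1$, and $(b-q)^2\ge 0$, all valid on the domain $q\in[-1/b,b]$; and each row of the table has $-1$ in exactly two slots, whose negatives are consecutive vertices of $-P$, so all five bounding lines $\partial H_v$ are accounted for and $\bigcap_v H_v=-P$. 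What you buy with this route is self-containment (you never need to cite the ``vertex of $P^\circ$ iff supporting line of $P$'' fact, only the definition of the polar as an intersection of half-spaces); what you pay is that you must inspect all pairwise inner products and match five vertex--edge incidences, whereas the paper's argument is shorter because it exploits symmetry and directly produces the five vertices of $P^\circ$. Your handling of the degenerate endpoints $q=-1/b$ and $q=b$ (triangle cases) via continuity of $K\mapsto K^\circ$ is a sensible extra remark, though not strictly needed since the direct argument works there too.
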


\begin{proof}
  This is a direct consequence of the following general fact for convex polygons: a point $(A,B)$ is a vertex of $P^\circ$ if and only if the line $Ax+By=1$ %noted as $(\frac{1}{x},0)$---$(0,\frac{1}{y})$,
  contains a side of $P$. In this way:
  \begin{itemize}
    \item the line through $(-\frac{1}{q}, 0)$ and $(0, \pm\frac{1}{r})$ containing a side of $P$ corresponds to the vertex $(-q,\pm r)$ of $P^\circ$,
    \item the line through $(-\frac{1}{b}, 0)$ and $(0, \pm\frac{1}{c})$ containing a side of $P$ corresponds to the vertex $(-b,\pm c)$ of $P^\circ$,
    \item the line through $(b,c)$ and $(b,-c)$ containing a side of $P$ corresponds to the vertex $(\frac{1}{b},0)$ of $P^\circ$.
  \end{itemize}
   Therefore $P^\circ$ coincides with $-P$.
\end{proof}

By the previous lemma, if the barycenter of $P$ lies at the origin, the same holds for its dual $-P$, and thus $P$ is a critical point for the volume product on the space of projective images of $P$.

The next lemma gives the exact condition on the pair $(q,b)$ which ensures that $P$ is such a critical point.

\begin{lemma}\label{lemma:criticalP}
  The barycenter of $P$ lies at the origin if and only if
  \[
    f(q,b)\coloneqq q^3b-q^2b^2-q^2+2qb^5+qb+3b^4=1.
  \]
\end{lemma}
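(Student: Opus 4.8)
The plan is to compute the first coordinate of the barycenter $\GG_P$ directly and to impose that it vanishes. Since $P$ is symmetric with respect to the $x$-axis, $\GG_P$ lies on that axis, so $\GG_P=0$ is equivalent to $\int_P x\,dx\,dy=0$. The reflection $(x,y)\mapsto(x,-y)$ interchanges the two halves $P^{\pm}=P\cap\{\pm y\ge 0\}$ while leaving $x$ unchanged, hence $\int_P x\,dx\,dy=2\int_{P^+}x\,dx\,dy$ and $|P|=2|P^+|$, so it is enough to work with the quadrilateral $P^+$, whose vertices are $(-\tfrac1b,0)$, $(q,r)$, $(b,c)$ and $(b,0)$, the last one being the midpoint of the vertical side of $P$.

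Next I would split $P^+$ along the diagonal joining $(q,r)$ to $(b,0)$ into the triangle $T_1$ with vertices $(-\tfrac1b,0),(q,r),(b,0)$ and the triangle $T_2$ with vertices $(q,r),(b,c),(b,0)$; this decomposition is legitimate because $P^+$, being an intersection of convex sets, is convex (in the degenerate cases $q=\pm\tfrac1b$ or $q=b$ one triangle simply has zero area and the formulas below remain valid). Using that a triangle with vertices $v_1,v_2,v_3$ has area $\tfrac12|\det(v_2-v_1,v_3-v_1)|$ and barycenter $\tfrac13(v_1+v_2+v_3)$, and writing $\bar x_A$ for the first coordinate of $\GG_A$, one computes
\[
  |T_1|=\tfrac{r(b^2+1)}{2b},\quad \bar x_{T_1}=\tfrac13\bigl(b-\tfrac1b+q\bigr),\quad |T_2|=\tfrac{(b-q)c}{2},\quad \bar x_{T_2}=\tfrac13(q+2b).
\]
Then $\int_{P^+}x\,dx\,dy=|T_1|\,\bar x_{T_1}+|T_2|\,\bar x_{T_2}$, and after clearing the common positive factor $\tfrac16$ the equation $\int_{P^+}x\,dx\,dy=0$ becomes
\[
  \frac{r(b^2+1)(b^2-1+qb)}{b^2}+(b-q)\,c\,(q+2b)=0 .
\]

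Finally I would clear the radicals. Recalling that $r^2=1+q^2$ and $rc=(1+bq)$, multiplication by $rb^2$ turns the left-hand side into the polynomial
\[
  (1+q^2)(b^2+1)(b^2-1+qb)+b^2(b-q)(1+bq)(q+2b),
\]
and a direct expansion shows that all the cross terms of degree $3$ and $4$ in $b$ cancel, leaving $3b^4+2qb^5+q^3b+qb-q^2b^2-q^2-1$. Hence $\GG_P=0$ is equivalent to $q^3b-q^2b^2-q^2+2qb^5+qb+3b^4=1$, that is, $f(q,b)=1$. The only genuine computation here is this last polynomial expansion, where the main care needed is to keep track of the many monomials and to verify the cancellations; everything before it is an immediate application of the elementary area and centroid formulas together with the $x$-axis symmetry of $P$.
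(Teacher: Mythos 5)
Your proof is correct, and it reaches the same polynomial identity as the paper after the same radical-clearing step $r^2=1+q^2$, $rc=1+bq$. The route differs only in the triangulation: the paper fans the pentagon out from the origin into six pairwise-symmetric triangles (equivalently, three triangles of $P^+$ sharing the origin as a common vertex), and expresses the barycenter condition as $\sum \tfrac{1}{3}(\text{sum of the two outer abscissas})\cdot\det = 0$, where each $\det$ is twice a signed area; you instead observe that $P^+$ is a convex quadrilateral with vertices $(-\tfrac1b,0)$, $(q,r)$, $(b,c)$, $(b,0)$ and cut it along the diagonal from $(q,r)$ to $(b,0)$ into two triangles $T_1$, $T_2$, then write $\int_{P^+}x = |T_1|\bar x_{T_1}+|T_2|\bar x_{T_2}$. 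Your decomposition has the small advantage that both triangle areas are manifestly positive, so no bookkeeping of signed areas or orientation is needed; the paper's fan from the origin is a bit more uniform to state. Both versions are of essentially the same length, and both reduce to the identical polynomial expansion $(1+q^2)(b^2+1)(b^2-1+qb)+b^2(b-q)(1+bq)(q+2b)=0$, which I checked collapses to $f(q,b)=1$.
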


\begin{proof}

  Since $P$ is symmetric, its barycenter lies on the $x$-axis, and therefore only the abscissa of the barycenter is relevant. This coordinate can be computed by decomposing the pentagon into six pairwise symmetrical triangles, with one vertex at the origin and one common side with $P$.
  The condition on the barycenter then becomes
  \[
    \frac{(b+b)}{3}\begin{vmatrix}b & b \\ 0 & c\end{vmatrix}
    +
    \frac{(b+q)}{3}\begin{vmatrix}b & q \\ c & r\end{vmatrix}
    +
    \frac{(q-\frac{1}{b})}{3} \begin{vmatrix}q & -\frac{1}{b} \\ r & 0\end{vmatrix}
    = 0.
  \]
  Multiplying by $3b^2r$ and using that $r^2=1+q^2$ and $rc = 1 + bq$ gives the desired condition.
\end{proof}

As concrete examples of pairs $(q,b)$ such that $P$ is critical, i.e. solutions of the equation in Lemma \ref{lemma:criticalP}, we can consider the following pairs:
\begin{itemize}
  \item the two pairs $(-1/\sqrt{2}, \sqrt{2})$ and $(1/\sqrt{2},1/\sqrt{2})$ at which the pentagon degenerates respectively into an equilateral triangle $\Delta$ and its dual $-\Delta$,
  \item the pair $(-\cos(2\pi/5)/\sqrt{\cos(\pi/5)}, \sqrt{\cos(\pi/5)})$ at which the pentagon is regular,
  \item the remarkable pair $(0,1/\sqrt[4]{3})$ which corresponds to a critical saddle point as we will show later.
\end{itemize}

In addition to these few simple concrete examples, the next lemma \ref{lemma:graphP} shows that we have indeed a whole continuous family of critical pentagons $\{P_q\}_{q\in [-1/\sqrt{2},1/\sqrt{2}]}$.

\begin{lemma}\label{lemma:graphP}
  The set
  \[
    \biggl\{ (q,b) \in [-\frac{1}{\sqrt{2}},\frac{1}{\sqrt{2}}] \times [\frac{1}{\sqrt{2}}, \sqrt{2}]\biggm| q^3b-q^2b^2-q^2+2qb^5+qb+3b^4=1\biggr\}
  \]
  is the graph of a decreasing smooth function $b(q)$ in the variable $q$.
\end{lemma}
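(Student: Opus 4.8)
# Proof Proposal for Lemma \ref{lemma:graphP}

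The plan is to apply the implicit function theorem to the equation $f(q,b) = 1$, where $f(q,b) = q^3b - q^2b^2 - q^2 + 2qb^5 + qb + 3b^4$, on the rectangle $R = [-\tfrac1{\sqrt2}, \tfrac1{\sqrt2}] \times [\tfrac1{\sqrt2}, \sqrt2]$. The two known solutions $(-1/\sqrt2, \sqrt2)$ and $(1/\sqrt2, 1/\sqrt2)$ sit at the corners of $R$, which suggests the solution set is a single arc joining them, and the claim is that this arc is the graph of a function $b = b(q)$ that is smooth and decreasing.

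First I would verify that the two corner points indeed satisfy $f = 1$, and record the sign of $f - 1$ at the other two corners $(-1/\sqrt2, 1/\sqrt2)$ and $(1/\sqrt2, \sqrt2)$, so that a continuity/connectedness argument pins down that the zero set of $f - 1$ traverses the rectangle from one corner to the opposite one. Second, and this is the analytic heart of the argument, I would show $\partial f / \partial b > 0$ throughout $R$ (or at least along the zero set). We compute
\[
  \frac{\partial f}{\partial b} = q^3 - 2q^2 b + 10 q b^4 + q + 12 b^3.
\]
On $R$ we have $b \geq 1/\sqrt2 > 0$ and $|q| \leq 1/\sqrt2$, so $12b^3 \geq 12 \cdot 2^{-3/2}$ is sizeable, while the terms involving $q$ are bounded: $|q^3| \leq 2^{-3/2}$, $|2q^2 b| \leq 2 \cdot \tfrac12 \cdot \sqrt2 = \sqrt2$, $|10 q b^4| \leq 10 \cdot 2^{-1/2} \cdot 4 = 20\sqrt2$, and $|q| \leq 2^{-1/2}$. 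The crude bound is not quite enough because of the $10qb^4$ term, so I would instead argue more carefully: when $q \geq 0$ all five terms are nonnegative except $-2q^2 b$, and $10qb^4 + 12 b^3 \geq 2q^2 b$ easily since $b \geq 1/\sqrt2$ forces $12 b^3 \geq 12 b \cdot \tfrac12 = 6b \geq 2q^2 b$; when $q < 0$, write $q = -s$ with $0 < s \leq 1/\sqrt2$ and show $12 b^3 - 10 s b^4 - s^3 - s - 2 s^2 b > 0$ using $b \leq \sqrt2$, so $10 s b^4 \leq 10 s b^3 \sqrt 2$ and $12 b^3 - 10\sqrt2\, s\, b^3 = b^3(12 - 10\sqrt2\, s) \geq b^3(12 - 10) = 2b^3$, which dominates the remaining lower-order terms $s^3 + s + 2s^2 b \leq 2^{-3/2} + 2^{-1/2} + \sqrt2 < 2b^3$ since $b^3 \geq 2^{-3/2}$. (The exact constants may need minor tuning, but the structure of the estimate is clear: $12b^3$ beats everything.)

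Third, with $\partial f/\partial b > 0$ established on $R$, the implicit function theorem gives that the zero set of $f - 1$ in $R$ is locally the graph of a smooth function $b(q)$; combined with the connectedness/endpoint analysis from the first step, this globalizes to say the whole solution set in $R$ is the graph of a smooth $b : [-1/\sqrt2, 1/\sqrt2] \to [1/\sqrt2, \sqrt2]$. Finally, monotonicity: $b'(q) = -\dfrac{\partial f/\partial q}{\partial f/\partial b}$, so since the denominator is positive it suffices to show $\partial f/\partial q > 0$ along the graph. We have
\[
  \frac{\partial f}{\partial q} = 3q^2 b - 2q b^2 - 2q + 2b^5 + b,
\]
and again the positive terms $2b^5 + b$ (with $b \geq 1/\sqrt2$, so $2b^5 + b \geq 2 \cdot 2^{-5/2} + 2^{-1/2} = 2^{-1/2} + 2^{-1/2}$, but really $2b^5$ can be small) — here the dominant positive term when $b$ is near $1/\sqrt2$ is less obvious, so I expect this to be the main obstacle: showing $\partial f/\partial q > 0$ on $R$ may actually fail pointwise on all of $R$ and only hold along the zero set. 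The safer route is to use the constraint $f = 1$ to eliminate one term: on the graph, substitute $3b^4 = 1 - q^3 b + q^2 b^2 + q^2 - 2qb^5 - qb$ into an expression for $\partial f/\partial q$ (or differentiate the relation $f(q, b(q)) = 1$ and manipulate), reducing the monotonicity to a polynomial inequality in $q$ on $[-1/\sqrt2, 1/\sqrt2]$ with $b = b(q) \in [1/\sqrt2, \sqrt2]$ that can be checked by elementary estimates or by noting it holds at the endpoints and has controlled derivative. The bulk of the writing will be these two sign verifications; everything else is a routine invocation of the implicit function theorem.
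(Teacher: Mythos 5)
Your high-level strategy matches the paper's: invoke the implicit function theorem to realize the solution set as a graph $b(q)$, then deduce monotonicity from $b'(q) = -(\partial_q f)/(\partial_b f)$ by establishing positivity of both partial derivatives. However, neither of your two sign verifications actually closes, so there is a genuine gap.

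For $\partial_b f > 0$ with $q<0$, the term-by-term bound you propose yields $2b^3$ on the left against $s^3+s+2s^2b$ on the right, but at $s=1/\sqrt2$, $b=1/\sqrt2$ this reads $2b^3 = 1/\sqrt2 \approx 0.71$ versus $s^3+s+2s^2b \approx 2.47$, so the inequality is false as stated; the margin is in fact razor-thin (the true value of $\partial_b f$ at that corner is exactly $1/\sqrt2$), so no crude ``$12b^3$ beats everything'' estimate will work. The paper instead observes that $\partial_{bq}f = 3q^2 - 4bq + 10b^4 + 1$ is a quadratic in $q$ with everywhere-negative discriminant, hence $\partial_b f$ is increasing in $q$, which reduces the problem to checking positivity of the single-variable quartic $h(b)\coloneqq\partial_b f(-1/\sqrt2,b)$.

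For $\partial_q f > 0$, you speculate that pointwise positivity on the rectangle ``may actually fail'' and propose a workaround using the constraint $f=1$. This worry is unfounded: $\partial_q f = 3bq^2 - 2(b^2+1)q + 2b^5 + b$ is a quadratic in $q$ whose discriminant $4(-6b^6+b^4-b^2+1)$ is strictly negative for $b\in(1/\sqrt2,\sqrt2]$ and zero at $b=1/\sqrt2$, so $\partial_q f>0$ throughout the rectangle except at the single corner $(1/\sqrt2,1/\sqrt2)$. This clean discriminant argument is what the paper uses, and it is both simpler and correct where your proposed substitution route is neither worked out nor obviously tractable.

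In short: same skeleton, but both of your key estimates need to be replaced. The discriminant trick for $\partial_q f$ and the reduction to a one-variable polynomial for $\partial_b f$ via monotonicity in $q$ are the missing ingredients.
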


\begin{center}
  \includegraphics[width=120mm]{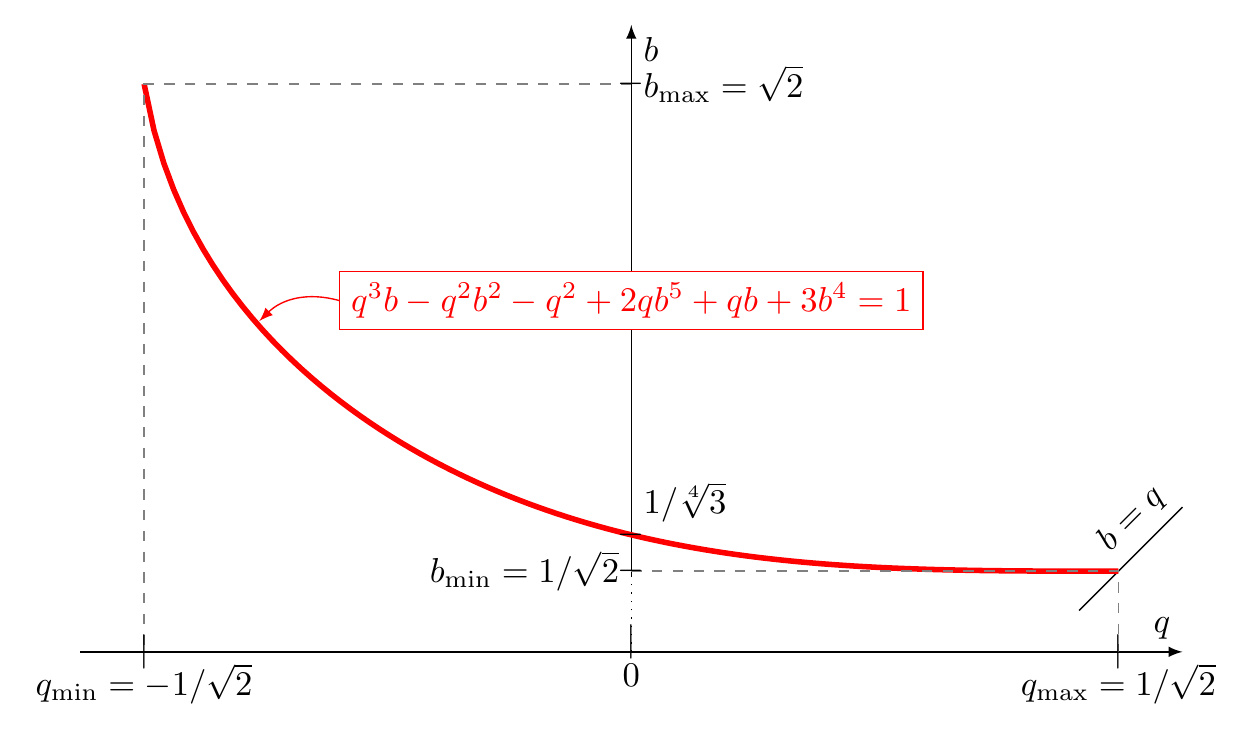}
\end{center}

As mentioned above, the extremal pentagons $P_{\pm1/\sqrt{2}}$ degenerate to equilateral triangles.

\begin{proof}
According to the implicit function theorem, the result follows from elementary computations that both partial derivatives of the function $f(q,b) = q^3b-q^2b^2-q^2+2qb^5+qb+3b^4$ are strictly positive on this domain, except at point $(1/\sqrt{2},1/\sqrt{2})$ where the first partial derivative is zero.

Indeed first observe that $\partial_q f=3bq^2-2(b^2+1)q+2b^5+b$ is a quadratic polynomial in the variable $q$. One can check that its discriminant $-6b^6+b^4-b^2+1$ is always negative for $b \in (1/\sqrt{2},\sqrt{2}]$, and zero at $1/\sqrt{2}$, by studying its first and second derivative.

Next we compute that $\partial_b f=q^3-2bq^2+(10b^4+1)q +12b^3$ and check through the study of $\partial_{bq} f$ that $\partial_b f(q,b)\geq h(b)\coloneqq\partial_b f(-1/\sqrt{2},b)$ which is a polynomial of degree $4$. A straightforward variational study of $h$ ensures its positivity on the desired domain.
\end{proof}

Now we explain how to compute the Hessian of the functional $\FF_{P}$ associated to some pentagon $P$ belonging the above family $\{P_q\}$. For this, we must compute the inertia matrix $I \coloneqq I(P)=\int_P x_i x_jdx$ and the area $|P|$ of such a pentagon. Since these quantities are additive with respect to disjoin union, we can perform these computations by decomposing the pentagon into triangles and using the following lemma.

\begin{lemma}\label{lemma:inertia}
  The inertia matrix of a triangle $\Delta$ with vertices $(0,0)$, $(x_1,y_1)$ and $(x_{2}, y_{2})$ is
  \[
    I(\Delta) = \frac{D}{12}
      \begin{pmatrix}
        I_{xx}(\Delta) & I_{xy}(\Delta)\\
        I_{xy}(\Delta) & I_{yy}(\Delta)
      \end{pmatrix}
  \]
  where

  \[
    \left\{
    \begin{aligned}
      I_{xx}(\Delta) &= x_1^2+x_1x_2+x_2^2\\
      I_{xy}(\Delta) &= x_1y_1 + \frac{x_1y_2+x_2y_1}{2} + x_2y_2\\
      I_{yy}(\Delta) &= y_1^2+y_1y_2+y_2^2
    \end{aligned}
    \right.
  \]
  and
  $
    D = \begin{vmatrix}
        x_1 & x_2 \\
        y_1 & y_2
      \end{vmatrix}
      = |x_1y_2 - x_2y_1|
  $
  is twice the area of the triangle.
\end{lemma}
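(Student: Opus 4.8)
The plan is to reduce the computation to a single affine change of variables onto the standard simplex, where all the relevant moments are absolute constants.

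First I would set $\Delta_2 = \{(s,t)\in\R^2 \mid s,t\geq 0,\ s+t\leq 1\}$ and introduce the affine parametrization $\phi(s,t) = s(x_1,y_1) + t(x_2,y_2)$, which maps $\Delta_2$ onto $\Delta$; its Jacobian determinant is the constant $x_1y_2 - x_2y_1$, of absolute value $D$. (If $D = 0$ the triangle is degenerate, both sides of the claimed identity vanish, and there is nothing to prove.) The change of variables formula then gives, for any integrable $g$,
\[
  \int_\Delta g(x,y)\,dx\,dy \;=\; D\int_{\Delta_2} g\big(sx_1 + tx_2,\ sy_1 + ty_2\big)\,ds\,dt .
\]

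Next I would record the handful of needed moments of $\Delta_2$: namely $\int_{\Delta_2} 1 = \tfrac12$, $\int_{\Delta_2} s^2\,ds\,dt = \int_{\Delta_2} t^2\,ds\,dt = \tfrac{1}{12}$, and $\int_{\Delta_2} st\,ds\,dt = \tfrac{1}{24}$ (each a one-line iterated integral, or a special case of $\int_{\Delta_2} s^a t^b\,ds\,dt = a!\,b!/(a+b+2)!$). Applying the displayed identity to $g(x,y) = x^2$, expanding $(sx_1 + tx_2)^2$, and substituting these moments yields $\int_\Delta x^2\,dx\,dy = \tfrac{D}{12}(x_1^2 + x_1x_2 + x_2^2)$; the choices $g = y^2$ and $g = xy$ give the other two entries of $I(\Delta)$ in exactly the same way, while $g = 1$ recovers that $D$ is twice the area of $\Delta$.

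I do not expect any genuine obstacle here: the whole argument is one change of variables together with three elementary moment evaluations on $\Delta_2$. The only subtleties worth a line are that the formula must be orientation-independent — which is precisely why $D$ is defined as an absolute value — and the trivial degenerate case $D = 0$.
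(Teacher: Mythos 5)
Your proof is correct and takes essentially the same approach as the paper: parametrize $\Delta$ by the standard simplex, change variables, and evaluate the resulting moments. The paper carries out only the $I_{xx}$ integral (noting $I_{xy}$ is unused later and $I_{yy}$ is symmetric), while you spell out all three plus the degenerate case; the core argument is identical.
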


\begin{proof}
Since we will not need the terms $I_{xy}(\Delta)$ in what follows, we will only do the computations for $I_{xx}(\Delta)$, the case of $I_{yy}(\Delta)$ being identical. An easy change of variables obtained by parametrizing our triangle by the triangle with vertices $(0,0)$, $(1,0)$ and $(0,1)$ gives us
\[
 I_{xx}(\Delta) = \int_0^1\int_0^{1-t} (sx_1+tx_2)^2 D dsdt= \frac{ D }{12} \left[x_1^2 + x_1x_2 + x_2^2\right].
\]
\end{proof}

By symmetry, the inertia matrix of such critical pentagon is diagonal so we only need to compute the terms $I_{xx}(P)$ and $I_{yy}(P)$.
By decomposing as before $P_{q}$ in six pairewise symmetrical triangles we get first that
\begin{equation}\label{area}
  |P_q| = \begin{vmatrix}q & -b^{-1} \\ r & 0\end{vmatrix} + \begin{vmatrix}b & q \\ c & r\end{vmatrix} + \begin{vmatrix}b & b \\ 0 & c\end{vmatrix}.
\end{equation}
Using the same decomposition and Lemma \ref{lemma:inertia} we derive that
  \begin{align}\label{ixx}
      I_{xx}(P) &=&& \frac{1}{6}\begin{vmatrix}q & -b^{-1} \\ r & 0\end{vmatrix}(-b^{-2}+-b^{-1}q+q^2) &&+ \frac{1}{6}\begin{vmatrix}b & q \\ c & r\end{vmatrix}(q^2+qb+b^2) &+& \frac{1}{6}\begin{vmatrix}b & b \\ 0 & c\end{vmatrix}(3b^2)\\[14pt]
      I_{yy}(P) &=&& \frac{1}{6}\begin{vmatrix}q & -b^{-1} \\ r & 0\end{vmatrix}(r^2) &&+ \frac{1}{6}\begin{vmatrix}b & q \\ c & r\end{vmatrix}(r^2+rc+c^2) &+& \frac{1}{6}\begin{vmatrix}b & b \\ 0 & c\end{vmatrix}(c^2)\label{iyy}
    \end{align}

Let us define the functions
\[
  s(q, b) \coloneqq \det\left(|P|^2I_2-16 I(P)^2\right)=\frac19\det(\Hess_{(0,0)}\mathcal{F}_P),
\]
where the second equality follows from equation (\ref{expression det}),
and
\[
  t(q, b) \coloneqq \frac12\tr\left(|P|^2I_2-16I(P)^2\right)=8|P|^2\left(\tr(\cov(\Delta^\circ)\cov(\Delta))-\tr(\cov(P^\circ)\cov(P))\right).
\]

For the pair $(q,b)=(0,-\frac1{\sqrt[4]3})$, we get using \eqref{area},\eqref{ixx} and \eqref{iyy}
\[
 |P|       = \sqrt[4]{3}\Big(          1+\frac{2}{3}\sqrt{3}\Big),\qquad
 I_{xx}(P) = \sqrt[4]{3}\Big(\frac{2}{9}+\frac{1}{6}\sqrt{3}\Big),\qquad
 I_{yy}(P) = \sqrt[4]{3}\Big(\frac{1}{6}+\frac{2}{9}\sqrt{3}\Big)
\]
and therefore
\[
  s(0,-\frac1{\sqrt[4]3})=\frac1{729}(12\sqrt3+17)(4\sqrt3-13)<0.
\]

Hence $\det(\Hess\mathcal{F}_{P_0})$ is negative, and thus $\mathcal{F}_{P_0}$ has a critical saddle point at the origin. This proves item \ref{item_saddle}) of Theorem \ref{th:pentagons}.

To prove item \ref{item_counterexample}) of Theorem \ref{th:pentagons}, it will be enough to compute the derivative of $t(q,b(q))$ at $q_0=-\frac1{\sqrt2}$.
First, at $(q_0,b_0)=(-\frac1{\sqrt2},\sqrt2)$ we have
\begin{align*}
  |P_q|&=\frac{3\sqrt3}2, && \frac{\partial }{\partial q}|P_q|= \frac{\sqrt6}2, &&\frac{\partial }{\partial b}|P_q|=-\frac{\sqrt6}4,\\
  I_{xx}(P_q)&=\frac{3\sqrt3}8, && \frac{\partial }{\partial q}I_{xx}(P_q)= \frac{\sqrt6}2, &&\frac{\partial }{\partial b}I_{xx}(P_q)=-\frac{\sqrt6}4,\\
  I_{yy}(P_q)&=\frac{3\sqrt3}8, && \frac{\partial }{\partial q}I_{yy}(P_q)= -\frac{\sqrt6}8, &&\frac{\partial }{\partial b}I_{yy}(P_q)=-\frac{\sqrt6}{16}.
\end{align*}
On the other hand, one has $\nabla f(q_0,b_0)=\frac{9\sqrt2}4(6,1)$ and thus $\frac{\partial b}{\partial q} (q_0)=-6$. Therefore, using the previous values,
\[
  \left.\frac{\partial t}{\partial q}\right|_{(q_0,b_0)}=-\frac{9\sqrt2}{4},\qquad \left.\frac{\partial t}{\partial b}\right|_{(q_0,b_0)}=\frac{9\sqrt2}{8},\qquad \left.\frac{d}{d q}\right|_{q_0} t(q,b(q))=\left.\frac{\partial t}{\partial q}+\frac{\partial b}{\partial q}\frac{\partial t}{\partial b}\right|_{(q_0,b_0)}=-{9\sqrt2}.
\]

Since $t(q_0,b_0)=0$, this shows that $t(q,b(q))<0$ near $q_0$, thus finishing the proof of Theorem \ref{th:pentagons}.

\begin{center}
  \includegraphics[width=100mm]{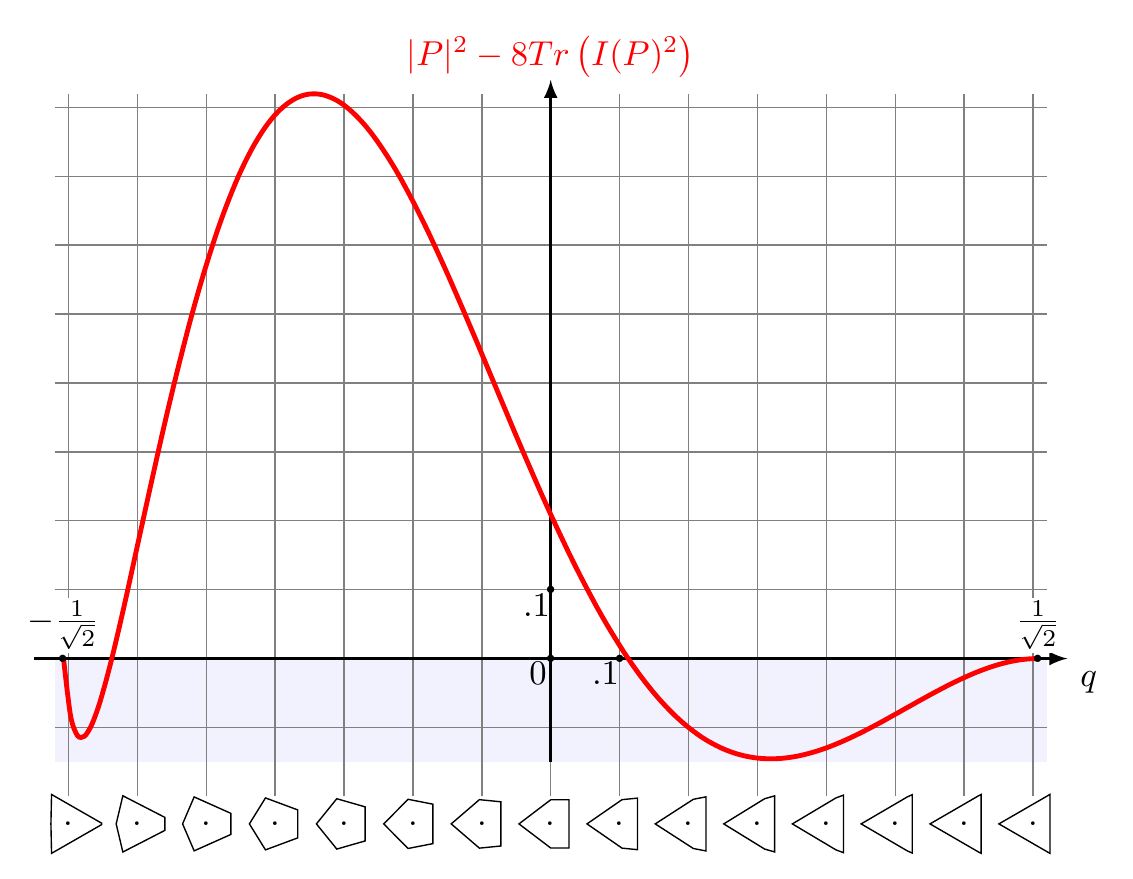}
\end{center}

To complete the picture, we conclude with a numerical analysis of the whole family of critical pentagons $\{P_{q}\colon {q \in [-1/\sqrt{2}, 1/\sqrt{2}]}\}$.

The previous three lemmas (\ref{lemma:dualP}, \ref{lemma:criticalP} and \ref{lemma:graphP}) tell us that for any $q \in [-1/\sqrt{2}, 1/\sqrt{2}]$ we can compute with very high precision (at least $10^{-15}$ using \texttt{python} with \texttt{scipy}) the unique $b \in [1/\sqrt{2}, \sqrt{2}]$ such that $P_{q,b}$ is critical. Then all vertices are computed with precision at least $10^{-14}$, and therefore the polynomial function $|P|^2-8\tr\left(I(P)^2\right)$ is calculated with precision at least $10^{-13}$ to obtain the above curve, which is negative if $\tr[\cov(P_q)\cov(P_q^\circ)] > 1/8$ as $\cov(P_q) =I(P_q)/|P_q|$.

%%%%%%%%%%%%%%%%%%%%%%%%%%%%%%%%%%%%%%%

%%%%%%%%%%%%%%%%%%%%%%%%%%%%%%%%%%%%%%%

\end{document}